\theoremstyle{plain}
\newtheorem{thm}{Theorem}[section]
\newtheorem{lem}[thm]{Lemma}
\newtheorem{prop}[thm]{Proposition}
\newtheorem{cor}[thm]{Corollary}
\theoremstyle{definition}
\newtheorem{remark}[thm]{Remark}
\newtheorem{Def}[thm]{Definition}
\numberwithin{equation}{section}
\DeclareMathOperator{\Irr}{Irr}
\newcommand{\lin}{\textnormal{lin}}
\newcommand{\nl}{\textnormal{nl}}
\newcommand{\bnum}{\begin{enumerate}}
\newcommand{\enum}{\end{enumerate}}
\newcommand{\equa}[1]{%
\begin{equation*}
\begin{aligned}
#1
\end{aligned}
\end{equation*}
}
\begin{document}

\title{On Generalized Commutator}
\author[S. K. Prajapati and R. K. Nath]{S. K. Prajapati and R. K. Nath*}
\address{Sunil Kumar Prajapati\\
 Einstein Institute of Mathematics, Hebrew University of Jerusalem,
 Jerusalem 91904, Israel}
\email{skprajapati.iitd@gmail.com}
\address{Rajat Kanti Nath\\
Department of Mathematical Sciences, Tezpur
University,  Napaam-784028, Sonitpur, Assam, India.}
\email{rajatkantinath@yahoo.com}

\subjclass[2010]{primary 20D60, 20F12, 20F70,  secondary 20C15}
\keywords{Word map, Commutator, Irreducible character, finite group}

%

\thanks{*Corresponding author}

%
%
\begin{abstract}In this paper, we consider some generalized commutator equations in a finite group and show
that the number of solutions of such equations are characters of that group. We also obtain explicit formula for
this character, considering the equation $[\cdots[[[x_1,x_2],x_3],$ $x_4],\cdots, x_n] = g$, for some well-known classes of finite groups in terms of orders of the group, its center and its commutator subgroup. This paper is an extension of the works of Pournaki and Sobhani in \cite{mP04}.
\end{abstract}

\maketitle

%
%

\section{Introduction} \label{S:intro}
During the last years there was a great interest
in word maps in groups. These maps are defined as follows.
Let
 $F(x_1, x_2, \dots, x_n)$ denote the free group of words  on $n$ generators  $x_1, x_2, \dots, x_n$. Let $w = w(x_1,\cdots , x_n)$ be a non-trivial reduced word in $F(x_1, x_2, \dots, x_n)$. Then we may write $w = x^{m_1}_{i_1}x^{m_2}_{i_2}\cdots x^{m_r}_{i_r}$
where $1 \leq i_j \leq n, m_j \in \mathbb{Z}$. Let $G$ be a finite group. Then the word $w(x_1,x_2,\dots,x_n)$ defines
the map $w:G^n\longrightarrow G$ given by $(g_1,g_2,\dots,g_n)\mapsto w(g_1,g_2,\dots,g_n).$ The map $w:G^n\longrightarrow G$ is called a word map. For $g\in G$ we have
\[
|w^{-1}(g)| = |\{(a_1, a_2, \dots, a_n) \in G^n : w(a_1, a_2, \dots, a_n) = g \}|.
\]
Consider the map
$\zeta^w_G: G\longrightarrow \mathbb{N}\cup \{0\}$ given by $g\mapsto |w^{-1}(g)|.$ Note that $\zeta^w_G(g)$ denotes the number of solutions of the equation $w(x_1, x_2, \dots, x_n) = g$. For any word $w$, the map $\zeta^w_G$ is a class function on $G$ but not necessarily a character. Equations in finite groups have been studied by many authors since the days of Frobenius (see for example \cite{aV11, DN09, pS12, SR15, SR13,  Sol69, sP95} etc.).
If $w(x_1, x_2) = [x_1, x_2] := x_1^{-1}x_2^{-1}x_1x_2$, then Frobenius \cite{fro68} showed that
\begin{equation}\label{Frobeq}
\zeta^w_G = \sum_{\chi\in \Irr(G)}\frac{|G|}{\chi(1)}\chi,
\end{equation}
which is a character of $G$. Considering $w(x_1, x_2, \dots, x_n) = x_1 \cdots x_nx_1^{-1} \cdots x_n^{-1}$ or $[x_1, x_2]\cdots [x_{n-1}, x_n]$ Tambour \cite{tT98} showed that
$\zeta^w_G$ is a character of $G$. Extending the results of Tambour, Das and Nath \cite{DN09} showed that $\zeta^w_G$ is a character of $G$ if $w$ is an admissible word. Further, Parzanchevski and  Schul \cite{pS12} have considered some classes of words and extended the results of Das and Nath \cite{DN09}.

 In this paper, we consider the word $w(x_1, x_2, \dots, x_m) = [w_1(x_1,\cdots, x_n), w_2(x_{n+1},$ $\cdots, x_m)]$ and show that $\zeta^w_G$ is a character of $G$ under some condition. This extends a classical result of Frobenius mentioned above. Note that the word $w(x_1, x_2, \dots, x_m) = [w_1(x_1,\cdots, x_n),$ $w_2(x_{n+1},\cdots, x_m)]$ includes words that are not considered in \cite{pS12}. As a consequence of our result we show that  $\zeta^{w_n}_G$ is a character of $G$ when $w_{n}(x_1,\cdots,x_n)=[\cdots[[[x_1,x_2],x_3],$ $x_4],\cdots, x_n]$.
    The rest part of this paper is devoted to derive explicit formula for $\zeta^{w_n}_G$ for different classes of finite groups such as groups having unique non-linear irreducible character, Camina $p$-groups, generalized Camina groups etc.
It is worth mentioning that the case when $n = 2$ is considered in \cite{DN10, sG07, nY2015, mP04}.

Let $P^{(n)}_{w}$ be the probability distribution associated to the word map $w$ induced by $w(x_1, x_2, \dots, x_n)$ on $G$. Therefore, $P^{(n)}_{w} (g) = \dfrac{\zeta^w_G(g)}{|G|^n}$. In recent years, many authors have studied the probability distribution associated to the word maps induced by different kinds of words (see \cite{abert06, DN11, nath14, nath11, nikolov07} etc.). As an application of our results, one may compute the probability distribution  associated to the word map induced by the word $w_n(x_1, x_2, \dots, x_n)= [[\cdots[[x_1, x_2], x_3]\cdots], x_n]$  for the different families of finite groups considered in Section 3.
It may be mentioned here that ${P}^{(n)}_{w_n} (g)$ when  $g = 1$, the identity element of $G$, is studied by  Moghaddam et al. \cite{mrM05} in the name of $n$-th nilpotency degree of $G$.
 If $n = 2$ then we have ${P}^{(2)}_{w_n}(g) = {P}_g(G)$, a notion introduced and studied by Pournaki and   Sobhani in \cite{mP04}.

The following notations
are standard and essential for this paper. Throughout this paper $G$ denotes
 a finite group, $\Irr(G), \lin(G)$ and $\nl(G)$ denote the set of all irreducible characters,  linear irreducible characters and  nonlinear irreducible characters of $G$ respectively. For any normal subgroup $N$ of $G$, $\Irr(G|N)$ denotes  $\Irr(G)\setminus \Irr(G/N)$. Also, for any character $\chi$ of $G$ we write $\chi \downarrow_N$ to denote  the restriction of $\chi$ on $N$. If $\chi$ is a character of any subgroup $H$ then we write  $\chi \uparrow_H^G$ to denote the induced character of $G$ induced by $\chi$. The inner product of two characters $\phi$ and $\psi$ of $G$ is given by $\langle\phi, \psi\rangle = \frac{1}{|G|}\sum_{g\in G}\phi(g)\overline{\psi(g)}$ and $\langle\phi, \psi\rangle_H = \frac{1}{|H|}\sum_{g\in H}\phi(g)\overline{\psi(g)}$ for any subgroup $H$ of $G$. For any integer $i\geq 1$ define
inductively $Z_0(G)=\{1\}, Z_1(G) = Z(G)$ the center of G, $Z_i(G)/Z_{i-1}(G)=$ the center of $G/Z_{i-1}(G)$. Also $\gamma_1(G) = G$,
$\gamma_2(G) = [G, G] = G{}'$ the commutator subgroup of $G$ and $\gamma_{i+1}(G) = [\gamma_i(G), G]$.


\section{Commutator of two words}
In this section, we consider  the word
$$w(x_1,\dots,x_m):=[w_1(x_1,\cdots, x_n), w_2(x_{n+1},\cdots, x_m)]$$
and obtain some properties of   $\zeta^w_G$.
A word $w$ is called measure preserving with respect to a finite group $G$ if all the fibers
of this map are of equal size, i.e. $\zeta^w_G$ is a constant map.
We have the following main result of this section.
\begin{thm}\label{thmtype1}
Let $H$ be  a normal subgroup of $G$ and
$$
w(x_1,\dots,x_m):=[w_1(x_1,\cdots, x_n), w_2(x_{n+1},\cdots, x_m)]
$$
such that $m-n-1\geq 0$ and  the word map $w_2$ is measure preserving   with respect to $G$.
Then we have the following:
\begin{enumerate}
\item  If $m - n = 1$ and $\zeta^{w_1}_H$ is a character,  then $\zeta^w_G$ is a character constant on $G$-conjugacy classes of $H$.
\item  If $m - n \geq 2$ and $\zeta^{w_1}_H$ is a character, then $\zeta^w_G$ is a character.
\end{enumerate}
\end{thm}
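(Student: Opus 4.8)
The plan is to reduce everything to the classical Frobenius-type identity for the commutator map and then read off the inner products with irreducible characters; the only genuine work is a divisibility bookkeeping, and it is exactly this bookkeeping that forces the split into the two cases.

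First I would sort the solutions of $w(a_1,\dots,a_m)=g$ by the two intermediate values $u=w_1(a_1,\dots,a_n)$ and $v=w_2(a_{n+1},\dots,a_m)$. Since these two blocks of variables are disjoint, this gives
\[
\zeta^w_G(g)=\sum_{\substack{u,v\in G\\ [u,v]=g}}\zeta^{w_1}_G(u)\,\zeta^{w_2}_G(v).
\]
Because $w_2$ is measure preserving and depends on $m-n$ variables, $\zeta^{w_2}_G(v)=|G|^{m-n-1}$ for every $v$, so
\[
\zeta^w_G(g)=|G|^{m-n-1}\sum_{u\in G}\zeta^{w_1}_G(u)\,\bigl|\{v\in G:[u,v]=g\}\bigr|,
\]
where the hypothesis $m-n-1\ge 0$ guarantees that the exponent is a genuine nonnegative power of $|G|$.

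Next I would compute $\langle\zeta^w_G,\chi\rangle$ for $\chi\in\Irr(G)$ using the Schur-relation identity $\sum_{v\in G}\chi([u,v])=\frac{|G|}{\chi(1)}|\chi(u)|^2$, which follows from $\frac1{|G|}\sum_{v}\rho(v)^{-1}\rho(u)\rho(v)=\frac{\chi(u)}{\chi(1)}I$ for a representation $\rho$ affording $\chi$. Since the quantity is real, the same value results with $\overline{\chi([u,v])}$, and plugging in yields
\[
\langle\zeta^w_G,\chi\rangle=\frac{|G|^{m-n-1}}{\chi(1)}\sum_{u\in G}\zeta^{w_1}_G(u)\,|\chi(u)|^2.
\]
As the image of $w_1$ lies in $H$, the $u$-sum is supported on $H$, where $\zeta^{w_1}_G$ coincides with $\zeta^{w_1}_H$; writing $|\chi(u)|^2=\bigl((\chi\downarrow_H)\overline{(\chi\downarrow_H)}\bigr)(u)$ converts the sum into $|H|\,\bigl\langle\zeta^{w_1}_H,\ (\chi\downarrow_H)\overline{(\chi\downarrow_H)}\bigr\rangle_H$. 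Since $\zeta^{w_1}_H$ and $(\chi\downarrow_H)\overline{(\chi\downarrow_H)}$ are characters of $H$, this inner product is a nonnegative integer. For case (2), where $m-n\ge2$, there is a spare factor $|G|$, and $\chi(1)\mid|G|$ makes $|G|^{m-n-1}/\chi(1)\in\mathbb{Z}$; hence every $\langle\zeta^w_G,\chi\rangle$ is a nonnegative integer, so $\zeta^w_G$ is a character of $G$.

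For case (1), $m-n=1$, the prefactor $|G|^{m-n-1}$ collapses to $1$ and the divisibility above is no longer available, so I would change the target group to $H$. First, $[u,v]\in H$ whenever $u\in H$ and $v\in G$ by normality, so $\zeta^w_G$ is supported on $H$; being the solution-count of a word it is a class function of $G$, hence constant on $G$-conjugacy classes of $H$. To see that its restriction is a character of $H$, I would run the same computation with $\theta\in\Irr(H)$, now using the Clifford-flavoured identity $\sum_{v\in G}\theta([u,v])=\frac{|H|}{\theta(1)}\theta(u^{-1})(\theta\uparrow_H^G)(u)$ for $u\in H$, obtained exactly as before but summing over a transversal of $H$ in $G$ and recognizing $\sum_{t}\theta(t^{-1}ut)=(\theta\uparrow_H^G)(u)$. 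Taking the conjugate and simplifying gives
\[
\langle\zeta^w_G\downarrow_H,\theta\rangle_H=\frac{|H|}{\theta(1)}\,\bigl\langle\,\zeta^{w_1}_H\,\theta,\ (\theta\uparrow_H^G)\downarrow_H\,\bigr\rangle_H,
\]
which is the product of $|H|/\theta(1)\in\mathbb{Z}$ with an inner product of two characters of $H$, hence a nonnegative integer; thus $\zeta^w_G\downarrow_H$ is a character of $H$. The hard part is precisely this divisibility step distinguishing the two cases, together with establishing the mixed Frobenius/Clifford identity for $\sum_{v\in G}\theta([u,v])$ when $\theta$ lives on the normal subgroup $H$ while $v$ ranges over all of $G$; everything else is the bookkeeping outlined above.
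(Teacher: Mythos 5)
Your reduction to the coefficient formula is essentially the paper's own route: after exploiting measure preservation of $w_2$ and the Schur-type identity $\sum_{v\in G}\chi([u,v])=\frac{|G|}{\chi(1)}|\chi(u)|^2$, you land on
\[
\langle \zeta^{w}_G,\chi\rangle=\frac{|G|^{m-n-1}|H|}{\chi(1)}\,\langle \zeta^{w_1}_H\chi,\chi\rangle_H ,
\]
which is exactly the paper's key expansion, and your treatment of case (2) coincides with the paper's. One reading issue first: in the intended statement (made explicit by the paper's proof and by the corollary that follows it) the variables $x_1,\dots,x_n$ range over $H$, not over $G$. Your opening identity $\zeta^w_G(g)=\sum\zeta^{w_1}_G(u)\zeta^{w_2}_G(v)$ combined with the claim that ``$\zeta^{w_1}_G$ coincides with $\zeta^{w_1}_H$ on $H$'' is false in general (take $w_1=x_1x_2$ and $H<G$ proper: $\zeta^{w_1}_G\equiv|G|$ on $G$ while $\zeta^{w_1}_H\equiv|H|$ on $H$). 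This happens to be harmless only because your computation goes through verbatim once the first block of variables is restricted to $H$ from the outset.

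The genuine gap is in case (1). You prove that $\zeta^{w}_G\!\downarrow_H$ is a character of $H$, but the theorem asserts that $\zeta^{w}_G$ is a character of $G$, i.e.\ that $\frac{|H|}{\chi(1)}\langle \zeta^{w_1}_H\chi,\chi\rangle_H$ is a nonnegative integer for every $\chi\in\Irr(G)$. A $G$-class function supported on a normal subgroup whose restriction to that subgroup is a character of the subgroup need not be a character of the big group: the extension by zero of the regular character of $A_3$ to $S_3$ pairs with the trivial character of $S_3$ to give $1/2$. You abandon the divisibility by $\chi(1)$ at precisely the point where the paper supplies the missing idea: since $H$ is normal and $\zeta^{w_1}_H$ is constant on $G$-conjugacy classes of $H$, write $H=\bigcup_{j}Cl_G(x_j)$ and obtain
\[
\frac{|H|}{\chi(1)}\langle \zeta^{w_1}_H\chi,\chi\rangle_H=\sum_{j}\zeta^{w_1}_H(x_j)\,\omega_\chi(K_j)\,\chi(x_j^{-1}),
\qquad
\omega_\chi(K_j)=\frac{|Cl_G(x_j)|\,\chi(x_j)}{\chi(1)} .
\]
The right-hand side is an algebraic integer because the central character values $\omega_\chi(K_j)$ are; the left-hand side is rational (it is $|H|$ times a nonnegative integer divided by $\chi(1)$) and nonnegative, hence a nonnegative rational integer. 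Your Clifford-style identity $\sum_{v\in G}\theta([u,v])=\frac{|H|}{\theta(1)}\theta(u^{-1})(\theta\!\uparrow_H^G)(u)$ is correct and would make a nice supplementary remark, but it proves a different (and strictly weaker) conclusion and does not substitute for this algebraic-integer step.
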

\begin{proof}
 Suppose $\chi\in \Irr(G)$ is afforded by the irreducible representation $\rho_\chi$ of $G$.
Consider the element
$$
 z=\sum_{\substack{x_1,\dots,x_n\in H\\x_{n+1},\dots,x_m\in G}} [w_1(x_1,\cdots, x_n), w_2(x_{n+1},\cdots, x_m)].
 $$

\noindent Then, we have
\begin{eqnarray}\label{s}\displaystyle
\rho_\chi(z)&=& \sum_{x_1,\dots,x_n\in H}\rho_\chi(w_1(x_1,\cdots ,x_n)^{-1}) A_\chi(x_1,\cdots, x_n)
\end{eqnarray}

\noindent where
\begin{align}\label{lb}\displaystyle
A_\chi(x_1,&\cdots ,x_n) = \\ \nonumber
&\sum_{x_{n+1},\dots,x_m\in G}\rho_\chi(w_2(x_{n+1},\cdots, x_m)^{-1}w_1(x_1,\cdots, x_n)w_2(x_{n+1},\cdots, x_m)).
\end{align}
Since the word  $w_2$ is measure preserving with respect to $G$, $\zeta^{w_2}_G$ is a constant function and hence
$A_\chi(x_1,\cdots ,x_n)$ commutes with $\rho_\chi(g)$  for all $ g\in G$.  Hence, by Schur's  lemma (see \cite[Lemma 2.25]{Isaacs}), we have
\begin{equation}
\label{la}A_\chi(x_1,\cdots, x_n)= \alpha I
\end{equation}
 for some $\alpha \in \mathbb{C}$. Now
taking trace on the both sides of (\ref{la}) and using (\ref{lb}) we get
\begin{eqnarray}\label{lc}
\alpha &=& \frac{|G|^{m-n}}{\chi(1)} \chi(w_1(x_1,\cdots , x_n)).
 \end{eqnarray}
 Again taking trace on both side of \eqref{s} and using (\ref{la}),(\ref{lc}) we get
\begin{eqnarray}\label{l4}
\chi(z) &=& \displaystyle \frac{|G|^{m-n}}{\chi(1)} \sum_{h\in H}\zeta^{w_1}_H(h) \chi(h)\chi(h^{-1})\nonumber\\
                           &=& \frac{|G|^{m-n}|H|}{\chi(1)} \langle \zeta^{w_1}_H \overline{\chi},\overline{\chi} \rangle_H.
\end{eqnarray}
\noindent By definition of $\zeta^{w}_G$, we have
\begin{eqnarray}\label{l3}
\chi(z) &=& |G| \langle \zeta^{w}_G, \overline{\chi} \rangle.
\end{eqnarray}
\noindent Therefore, by  (\ref{l4}) and (\ref{l3}), we have
\begin{equation*}
 \zeta^{w}_G = \displaystyle \sum_{\chi\in \Irr(G)}\frac{|G|^{m-n-1}|H|}{\chi(1)} \langle \zeta^{w_1}_H \overline{\chi},\overline{\chi} \rangle_H ~\chi.
\end{equation*}

\noindent Since $\langle \zeta^{w_1}_H \overline{\chi},\overline{\chi} \rangle_H = \langle \zeta^{w_1}_H \chi,\chi \rangle_H$,
 from the above equation we get
\begin{equation}\label{l5}
 \zeta^{w}_G = \displaystyle \sum_{\chi\in \Irr(G)}\frac{|G|^{m-n-1}|H|}{\chi(1)} \langle \zeta^{w_1}_H \chi,\chi \rangle_H ~\chi.
\end{equation}

(1) It is enough to prove that $\frac{|H|}{\chi(1)} \langle \zeta^{w_1}_H \chi,\chi \rangle_H$ is an integer.
 Since $H$ is a normal subgroup of $G$, $H=\displaystyle \bigcup_{1\leq j \leq t}Cl_G(x_j)$ for some $x_j\in H$. For any irreducible character $\chi$ of $G$, we have
 \begin{eqnarray*}\label{A3}
 |H|~ \langle \zeta^{w_1}_H \chi,\chi \rangle_H
 &=& \sum_{h\in H}\zeta^{w_1}_H(h)\chi(h)\chi(h^{-1})\\
 &=& \sum_{1\leq j \leq t}|Cl_G(x_j)|~\zeta^{w_1}_H(x_j)~\chi(x_j)~\chi(x_j^{-1}).
 \end{eqnarray*}
 Suppose
 $K_j= \displaystyle \sum_{x\in Cl_G(x_j)}x$, for $1\leq j \leq t$. Then
 $w_{\chi}(K_j)= \frac{\chi(x_j)|Cl_G(x_j)|}{\chi(1)}$ is an algebraic integer for any irreducible character.
 Hence, we have
$$
\frac{\mid H\mid}{\chi(1)} \langle \zeta^{w_1}_H \chi, \chi \rangle_H= \sum_{1\leq j \leq k}\zeta^{w_1}_H(x_j)w_{\chi}(K_j)\chi(x_j^{-1}).
$$
The right hand side of the above equality is an algebraic integer. Thus left hand side is an
algebraic rational number and hence the result follows.

(2) It is clear that the coefficient
 $\frac{|G|^{m-n-1}|H|}{\chi(1)} \langle \zeta^{w_1}_H \chi,\chi \rangle_H$ in \eqref{l5} is a rational integer. Hence $\zeta^{w}_G$ is a character of $G$.
 \end{proof}

\begin{cor}
Let $H$ be a  normal subgroup of $G$ and
$$
w(x_1,\dots,x_m):=[x_1, w_2(x_2,\cdots, x_n)]
$$
where $x_1\in H$   and $n - 2 \geq 0$. If  $w_2$ is measure preserving  with respect to $G$ then $\zeta^w_G$ is a character of $G$.
\end{cor}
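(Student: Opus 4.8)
The plan is to recognize this corollary as the special case of Theorem \ref{thmtype1} in which the first word is the one-variable identity word. Matching the indexing of the theorem to that of the corollary, I would take $w_1(x_1) = x_1$, so that $w_1$ occupies a single variable and the full word $w = [w_1(x_1), w_2(x_2,\dots,x_n)]$ uses $n$ variables in total, with $w_2$ occupying $x_2,\dots,x_n$. Under this identification the theorem's parameters become ``$n_{\mathrm{thm}} = 1$'' and ``$m_{\mathrm{thm}} = n$'', so the hypothesis $m - n - 1 \geq 0$ of Theorem \ref{thmtype1} reads $n - 2 \geq 0$, which is precisely the standing assumption of the corollary. The two remaining hypotheses of the theorem, namely that $H$ is normal in $G$ and that $w_2$ is measure preserving with respect to $G$, are given outright.

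The one computation to record, which is immediate, is the value of $\zeta^{w_1}_H$ for the word $w_1(x_1) = x_1$. The associated word map is just the identity map $H \to H$, so every $h \in H$ has exactly one preimage and $\zeta^{w_1}_H(h) = 1$ for all $h \in H$. Hence $\zeta^{w_1}_H$ is the principal character of $H$; in particular it is a character of $H$, which is exactly the hypothesis ``$\zeta^{w_1}_H$ is a character'' required by both clauses of Theorem \ref{thmtype1}.

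With every hypothesis verified, I would simply invoke the theorem and split on the value of $n$. If $n = 2$, then $w_2$ is a single-variable word and $m_{\mathrm{thm}} - n_{\mathrm{thm}} = 1$, so clause (1) applies and yields that $\zeta^w_G$ is a character of $G$ (in fact one that is constant on the $G$-conjugacy classes of $H$); if $n \geq 3$, then $m_{\mathrm{thm}} - n_{\mathrm{thm}} \geq 2$ and clause (2) applies, giving directly that $\zeta^w_G$ is a character of $G$. Either way the conclusion follows. I do not expect a genuine obstacle here: all of the substantive work — the Schur's-lemma argument and the inner-product identity \eqref{l5} — is already carried out in Theorem \ref{thmtype1}, and the only point needing care is the clerical alignment of the corollary's variable labelling with the theorem's, together with the trivial observation that the identity word contributes the principal character.
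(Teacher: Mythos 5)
Your proof is correct and takes essentially the same route as the paper: the authors likewise specialize Theorem \ref{thmtype1} by taking $w_1(x_1)=x_1$, so that $\zeta^{w_1}_H\equiv 1$ is the principal character of $H$ and the coefficient in \eqref{l5} becomes $\frac{|G|^{n-2}|H|}{\chi(1)}\langle\chi,\chi\rangle_H$. They then split into the cases $n-2>0$ (direct integrality, i.e.\ part (2)) and $n-2=0$ (the algebraic-integer argument of part (1)), exactly as you do.
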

\begin{proof}
In  view of the proof of Theorem \ref{thmtype1}, we have
\[
\zeta^{w}_G = \sum_{\chi\in \Irr(G)}\frac{|G|^{n-2}|H|}{\chi(1)} \langle \chi,\chi \rangle _H~\chi.
 \]
If $n - 2 > 0$ then   it is clear that
 $\frac{|G|^{n-2}.|H|}{\chi(1)} \langle \chi,\chi\rangle_H$   is a rational integer for all $\chi\in \Irr(G)$. Hence, $\zeta^w_G$ is a character of $G$. If $n - 2 = 0$ then, using the same argument as in the proof of  Theorem \ref{thmtype1} part (1), one can complete the proof.
\end{proof}

In the above Theorem \ref{thmtype1}, taking $H = G$  we get the following result.
\begin{cor}\label{cor:commutator}
Let
$w(x_1,\dots,x_m):=[w_1(x_1,\cdots, x_n), w_2(x_{n+1},\cdots, x_m)].$
If $\zeta^{w_1}_G$ is a character and  the word $w_2$ is measure preserving  with respect to $G$, then $\zeta^w_G$ is a character of $G$ and
\[
\zeta^{w}_G  =  \sum_{\chi\in \Irr(G)}\frac{|G|^{m-n}}{\chi(1)} \langle \zeta^{w_1}_G \chi,\chi \rangle ~\chi.
\]
\end{cor}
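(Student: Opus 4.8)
The plan is to specialize Theorem \ref{thmtype1} to the case $H = G$, so that essentially all of the analytic work is already done and only a short bookkeeping argument for integrality remains. First I would observe that $G$ is trivially a normal subgroup of itself and that the standing hypotheses here—namely $w_2$ measure preserving with respect to $G$, $\zeta^{w_1}_G$ a character, and (inherited from Theorem \ref{thmtype1}) $m-n-1\geq 0$—are precisely those of Theorem \ref{thmtype1} with $H$ replaced by $G$. Hence equation \eqref{l5} applies verbatim, and using $\langle\cdot,\cdot\rangle_G = \langle\cdot,\cdot\rangle$ together with $|G|^{m-n-1}|G| = |G|^{m-n}$ it reads
\[
\zeta^{w}_G = \sum_{\chi\in \Irr(G)}\frac{|G|^{m-n-1}|G|}{\chi(1)} \langle \zeta^{w_1}_G \chi,\chi \rangle ~\chi = \sum_{\chi\in \Irr(G)}\frac{|G|^{m-n}}{\chi(1)} \langle \zeta^{w_1}_G \chi,\chi \rangle ~\chi,
\]
which is exactly the asserted formula.

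Next I would verify that each coefficient $\frac{|G|^{m-n}}{\chi(1)} \langle \zeta^{w_1}_G \chi,\chi \rangle$ is a non-negative integer, which is all that is needed to conclude that $\zeta^w_G$ is a character. Since $\zeta^{w_1}_G$ is a character by hypothesis, the product $\zeta^{w_1}_G\,\chi$ is again a character of $G$, and so $\langle \zeta^{w_1}_G \chi, \chi\rangle$ is the multiplicity of the irreducible constituent $\chi$ in $\zeta^{w_1}_G\,\chi$, hence a non-negative integer. For the remaining factor, $\chi(1)$ divides $|G|$ for every $\chi \in \Irr(G)$ (see \cite{Isaacs}), and since $m-n\geq 1$ it follows that $\chi(1)$ divides $|G|^{m-n}$; thus $\frac{|G|^{m-n}}{\chi(1)}$ is a positive integer. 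The product of the two is a non-negative integer, so $\zeta^w_G$ is a non-negative integer combination of irreducible characters, i.e. a character of $G$.

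The point I expect to matter most is the contrast with part (1) of Theorem \ref{thmtype1}: there, because $H$ was a proper subgroup, $\chi(1)$ need not divide $|H|$, which forced the more delicate argument through the algebraic integers $w_{\chi}(K_j)$ and the $G$-conjugacy-class decomposition of $H$. Taking $H = G$ removes exactly this difficulty—$\chi$ is irreducible over the full group, $\chi(1)$ divides $|G|$, and $\langle \zeta^{w_1}_G \chi, \chi\rangle$ is a genuine multiplicity rather than an inner product of restrictions to a proper subgroup. Consequently there is no real obstacle here, and the two cases of Theorem \ref{thmtype1} collapse into one, since being \emph{constant on $G$-conjugacy classes of $H = G$} is automatic for any class function.
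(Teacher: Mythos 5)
Your proposal is correct and follows essentially the same route as the paper, which obtains the corollary precisely by setting $H=G$ in Theorem \ref{thmtype1} and reading off \eqref{l5}. Your added observation that for $H=G$ the coefficient $\frac{|G|^{m-n}}{\chi(1)}\langle \zeta^{w_1}_G\chi,\chi\rangle$ is a non-negative integer directly (multiplicity of $\chi$ in the character $\zeta^{w_1}_G\chi$ times the integer $|G|^{m-n}/\chi(1)$) is a clean simplification that the paper leaves implicit by appealing to the two cases of the theorem.
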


\section{Explicit formula for $\zeta^{w_n}_G$}

In this section, we consider the word $w_{n}(x_1,\cdots,x_n) := [\cdots[[[x_1,x_2],x_3],x_4],\cdots, x_n]$ for $n\geq 2$. Then  $\zeta^{w_n}_G(g)$ denotes the number of solutions of the equation
\[
[\cdots[[[x_1,x_2],x_3],x_4],\cdots, x_n] = g.
\]
Note that the word $w_n$ can be defined recursively as
\equa{w_2(x_1,x_2)&=[x_1,x_2]=x_1^{-1}x_2^{-1}x_1x_2,\\
w_{3}(x_1,x_2,x_3)&=[w_2(x_1,x_2),x_3],\\
&\,\,\,\vdots\\
w_{n}(x_1,\cdots,x_n)&=[w_{n-1}(x_1, \dots, x_{n -1}),x_n].}
Thus $w_n$ is a particular type of the words considered in Section 2.
By Corollary \ref{cor:commutator}, it follows that $\zeta^{w_n}_G$ is a character of $G$ and
\begin{eqnarray}\label{gencommutsolfunction}
\zeta^{w_n}_G&=&|G|\sum_{\chi\in \Irr(G)}\frac{C^{w_{n}}(\chi)\chi}{\chi(1)}
\end{eqnarray}
where $C^{w_{n}}(\chi)=\langle \zeta^{w_{n-1}}_G \chi,\chi \rangle$. Note that $ C^{w_{m+1}}(\chi) = \chi(1)^2|G|^{m-1}$ whenever $m$ is greater than the nilpotency class of $G$.
For $n=2$, $C^{w_{2}}(\chi)=1$ and so equation \eqref{gencommutsolfunction} reduces to \eqref{Frobeq}; which is a classical result of Frobenius. For $n\geq 3$ there is no such formula for $C^{w_{n}}(\chi)$. However, the following proposition shows that $C^{w_n}(\chi)= |G|^{n - 2}$ if $\chi \in \lin(G)$.

\begin{prop}\label{GCPcoffeicientofirr_prop1}
 Let $G$ be a finite group and  $w_{n}(x_1,\cdots,x_n)$ denote the word $[\cdots[[x_1,x_2],x_3],\cdots, x_n]$ with $n\geq 3$. Then we have following.
\begin{enumerate}
\item If $\chi\in \lin(G)$, then $C^{w_n}(\chi)=\langle \zeta^{w_{n-1}}_G, 1_G\rangle$.
\item $\langle \zeta^{w_{n-1}}_G, 1_G\rangle=|G|\langle \zeta^{w_{n-2}}_G, 1_G\rangle$. In particular, $\langle \zeta^{w_{n-1}}_G, 1_G\rangle=|G|^{n-2}$.
\end{enumerate}
\end{prop}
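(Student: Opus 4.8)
The plan is to prove the two parts in order, with part (1) as an easy specialization of the main formula and part (2) as the genuine recursive step.

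For part (1), recall that $C^{w_n}(\chi) = \langle \zeta^{w_{n-1}}_G\, \chi, \chi\rangle$ by the definition given just before the proposition. When $\chi \in \lin(G)$ is a linear character, we have $\chi(1)=1$, so $\chi(h)\overline{\chi(h)} = |\chi(h)|^2 = 1$ for every $h\in G$. Expanding the inner product,
\begin{equation*}
C^{w_n}(\chi) = \frac{1}{|G|}\sum_{h\in G}\zeta^{w_{n-1}}_G(h)\,\chi(h)\,\overline{\chi(h)} = \frac{1}{|G|}\sum_{h\in G}\zeta^{w_{n-1}}_G(h) = \langle \zeta^{w_{n-1}}_G, 1_G\rangle,
\end{equation*}
which is exactly the claim. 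This step is routine and I expect no obstacle here.

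For part (2), I would unwind the meaning of $\langle \zeta^{w_{n-1}}_G, 1_G\rangle$. Since $\zeta^{w_{n-1}}_G(h)$ counts tuples $(a_1,\dots,a_{n-1})$ with $w_{n-1}(a_1,\dots,a_{n-1})=h$, summing $\zeta^{w_{n-1}}_G(h)$ over all $h$ simply counts all of $G^{n-1}$, but that total count is not directly what I want; instead I want to exploit the recursive structure $w_{n-1}(x_1,\dots,x_{n-1}) = [w_{n-2}(x_1,\dots,x_{n-2}), x_{n-1}]$. The key observation is that $\langle \zeta^{w_{n-1}}_G, 1_G\rangle = \zeta^{w_{n-1}}_G$ averaged over $G$, and I want to relate it to $\zeta^{w_{n-2}}_G$ evaluated suitably. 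The cleanest route is to apply the formula \eqref{gencommutsolfunction} (or equivalently Corollary \ref{cor:commutator}) to $w_{n-1} = [w_{n-2}, x_{n-1}]$, where $w_2(x_{n-1})$ here plays the role of the single free variable $x_{n-1}$ and the word $x_{n-1}\mapsto x_{n-1}$ is trivially measure preserving. This gives $\langle \zeta^{w_{n-1}}_G, 1_G\rangle$ in terms of $\langle \zeta^{w_{n-2}}_G\,\chi,\chi\rangle$ summed against the trivial character, and pairing $\zeta^{w_{n-1}}_G$ against $1_G$ picks out the coefficient of the trivial character $\chi = 1_G$ in the Frobenius-type expansion, producing the factor $|G|$.

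Concretely, I expect the identity $\langle \zeta^{w_{n-1}}_G, 1_G\rangle = |G|\cdot C^{w_{n-1}}(1_G) = |G|\,\langle \zeta^{w_{n-2}}_G\, 1_G, 1_G\rangle = |G|\,\langle \zeta^{w_{n-2}}_G, 1_G\rangle$, where the middle equality uses that $1_G$ is linear so part (1) applies. The \emph{In particular} clause then follows by an immediate induction: the base case $\langle \zeta^{w_2}_G, 1_G\rangle$ equals $|G|$ (since $\zeta^{w_2}_G$ is the Frobenius commutator-counting character and $\langle \zeta^{w_2}_G, 1_G\rangle = \sum_{\chi}\frac{|G|}{\chi(1)}\langle\chi,1_G\rangle\cdots$ collapses to $|G|$, as the commutator word takes every value the same expected number of times on the trivial character), and each recursion step multiplies by $|G|$, yielding $|G|^{n-2}$. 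The main obstacle I anticipate is getting the recursion to bite correctly: I must carefully verify that pairing $\zeta^{w_{n-1}}_G$ with $1_G$ isolates precisely the $\chi=1_G$ term and that the measure-preserving hypothesis needed by Corollary \ref{cor:commutator} is legitimately satisfied by the last-variable word, so that the single factor of $|G|$ (and no extra power) emerges at each stage.
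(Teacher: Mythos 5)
Your proof is correct and takes essentially the same route as the paper: part (1) by expanding the inner product using $|\chi(g)|^2=1$ for linear $\chi$, and part (2) by pairing the expansion \eqref{gencommutsolfunction} of $\zeta^{w_{n-1}}_G$ against $1_G$ so that orthogonality isolates the trivial-character term $|G|\,C^{w_{n-1}}(1_G)=|G|\,\langle\zeta^{w_{n-2}}_G,1_G\rangle$, with the identity word on the last variable trivially measure preserving. The only remark worth adding is that the direct count you set aside, $\sum_{h\in G}\zeta^{w_{n-1}}_G(h)=|G|^{n-1}$, already gives both claims of part (2) in one line, so the worry about the recursion ``biting correctly'' is unnecessary.
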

\begin{proof} $(1)$ Let $\chi\in \lin(G)$. Then
\[
C^{w_n}(\chi)=\langle \zeta^{w_{n-1}}_G\chi, \chi\rangle=\frac{1}{|G|}\sum_{g\in G}\zeta^{w_{n-1}}_G(g)|\chi(g)|^2=\langle \zeta^{w_{n-1}}_G, 1_G\rangle.
\]
$(2)$ Using \eqref{gencommutsolfunction}, we have
\begin{eqnarray*}
\langle \zeta^{w_{n-1}}_G, 1_G\rangle &=& \frac{1}{|G|}\sum_{g\in G}\zeta^{w_{n-1}}_G(g)\\
  &=& \frac{1}{|G|}\sum_{\chi\in \Irr(G)}\bigg( \sum_{g\in G}\frac{|G|}{\chi(1)}\langle \zeta^{w_{n-2}}_G\chi,\chi\rangle \chi (g)\bigg)\\
 &=& \frac{1}{|G|}\sum_{\chi\in \Irr(G)}\bigg(\frac{|G|}{\chi(1)}\langle \zeta^{w_{n-2}}_G\chi,\chi\rangle |G|\langle \chi, 1_G\rangle\bigg)\\
  &=&|G| \langle \zeta^{w_{n-2}}_G, 1_G\rangle.
\end{eqnarray*}
Therefore, $\langle \zeta^{w_{n-1}}_G, 1_G\rangle=|G|^{n-2}$.
\end{proof}

In view of the above proposition, \eqref{gencommutsolfunction} becomes
\begin{equation}\label{neweq0001}
\zeta^{w_n}_G = |G|^{n - 1}\sum_{\chi\in \lin(G)}\chi +  |G|\sum_{\chi\in \nl(G)}\frac{C^{w_{n}}(\chi) ~\chi}{\chi(1)}.
\end{equation}
In the following subsections we compute   $C^{w_{n}}(\chi)$ where $\chi \in \nl(G)$  for $n\geq 3$ and hence derive explicit formula for $\zeta^{w_n}_G(g)$  for some classes of finite  groups.

\subsection{$n$-isoclinism and Camina group}
The notion of $n$-isoclinism of groups is implicit in a short note of P.
Hall \cite{Hall} on verbal and marginal subgroups.
\begin{Def}\label{def:n-isoclinic}
Two groups $G$ and $H$ are $n$-isoclinic, if there exist isomorphisms $\phi$ and $\psi$:
\begin{enumerate}
\item $\phi$ is an isomorphism from $G/Z_n(G)$ to $H/Z_n(H)$;
\item $\psi$ is an isomorphism from $\gamma_{n+1}(G)$ to $\gamma_{n+1}(H)$;
\item $\psi$ is compatible with $\phi$, i.e. $\psi([\cdots[[g_1,g_2],g_3],\cdots, g_{n+1}])=[\cdots[[h_1,h_2],h_3],\cdots, h_{n+1}]$,
for any $h_i\in \phi(g_iZ_n(G))$, $g_i\in G$, $1\leq i\leq n+1$. In other words, we have a commutative diagram.
\end{enumerate}
\begin{center}
$\begin{CD}
\frac{G}{Z_n(G)}\times \cdots \times \frac{G}{Z_n(G)}  @>\phi\times \cdots\times \phi>>  \frac{H}{Z_n(H)}\times \cdots \times \frac{H}{Z_n(H)}  \\
@VV a_{(n+1,G)}V   @VV a_{(n+1,H)}V \\
\gamma_{n+1}(G)  @>\psi>>   \gamma_{n+1}(H),
\end{CD}$
\end{center}
where
$$a_{(n+1,G)}\big(\big(g_1Z_n(G), \cdots, g_{n+1}Z_n(G)\big)\big)=[\cdots[g_1,g_2],\cdots, g_{n+1}]$$
and $$a_{(n+1,H)}\big(\big(h_1Z_n(H), \cdots, h_{n+1}Z_n(H)\big)\big)=[\cdots[h_1,h_2],\cdots, h_{n+1}].$$
\end{Def}

\noindent We say that the pair $(\phi, \psi)$ is an $n$-isoclinism between $G$ and $H$. It is clear from the definition that an $n$-isoclinism induces also an $(n+1)$-isoclinism.
 If $n=1$, then $G$ and $H$ are called isoclinic groups.

In the following theorem we generalize  \cite[Lemma 3.5]{mP04}.
\begin{thm}\label{thm:isoclinic}
Let $G$ and $H$ be two finite groups and let $(\phi, \psi)$ be an isoclinism from $G$ to $H$.
If $g\in \gamma_{n+1}(G)$ then $\zeta^{w_{n+1}}_G(g) = (|G|/|H|)^{n+1}\zeta^{w_{n+1}}_H(\phi(g))$.
\end{thm}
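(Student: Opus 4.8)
The plan is to push the entire count down to the central quotients $G/Z(G)$ and $H/Z(H)$, on which the isoclinism is literally an isomorphism. The starting observation is that, as a map $G^{n+1}\to G$, the iterated commutator $w_{n+1}$ is insensitive to the central cosets of its arguments: since $[a,b]$ depends only on $aZ(G)$ and $bZ(G)$, an easy induction on $n$ shows that $w_{n+1}(a_1,\dots,a_{n+1})$ depends only on $a_1Z(G),\dots,a_{n+1}Z(G)$ and that its value lies in $\gamma_{n+1}(G)$. Hence $w_{n+1}$ factors through a well-defined map $\overline{w}_{n+1}^{\,G}\colon (G/Z(G))^{n+1}\to\gamma_{n+1}(G)$, and similarly one gets $\overline{w}_{n+1}^{\,H}$ for $H$.

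The key step is to show that the isoclinism intertwines these factored maps, i.e. that for $h_i\in\phi(g_iZ(G))$ one has
\[
\psi\big(w_{n+1}(g_1,\dots,g_{n+1})\big)=w_{n+1}(h_1,\dots,h_{n+1}).
\]
I would prove this by induction on $n$, the base case $n=1$ being precisely the compatibility condition in Definition \ref{def:n-isoclinic} for an isoclinism. For the inductive step, set $c=w_n(g_1,\dots,g_n)\in G'$ and use the induction hypothesis $\psi(c)=w_n(h_1,\dots,h_n)$; then I must compare $\psi([c,g_{n+1}])$ with $[\psi(c),h_{n+1}]$. Applying the compatibility condition to the single commutator $[c,g_{n+1}]$ yields $\psi([c,g_{n+1}])=[\tilde c,h_{n+1}]$ for any $\tilde c\in\phi(cZ(G))$, so it remains only to check that $\tilde c$ and $\psi(c)$ differ by a central element of $H$, i.e. that
\[
\phi(cZ(G))=\psi(c)Z(H)\qquad\text{for all }c\in G'.
\]
This identity is the genuine technical point. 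I expect it to be the main obstacle, but it resolves cleanly: both $c\mapsto\phi(cZ(G))$ and $c\mapsto\psi(c)Z(H)$ are homomorphisms $G'\to H/Z(H)$, and on a generating commutator $c=[g_1,g_2]$ they both equal $[\phi(g_1Z(G)),\phi(g_2Z(G))]$ (the first because $\phi$ is a homomorphism of central quotients, the second by the $n=1$ compatibility); since commutators generate $G'$, the two homomorphisms agree everywhere.

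With the intertwining in hand the count is bookkeeping with central cosets. For $g\in\gamma_{n+1}(G)$, the fibre of $w_{n+1}$ over $g$ is a union of full $Z(G)^{n+1}$-cosets, so
\[
\zeta^{w_{n+1}}_G(g)=|Z(G)|^{n+1}\,\big|\{\bar a\in (G/Z(G))^{n+1} : \overline{w}_{n+1}^{\,G}(\bar a)=g\}\big|,
\]
and likewise $\zeta^{w_{n+1}}_H(\psi(g))=|Z(H)|^{n+1}\,|\{\bar b : \overline{w}_{n+1}^{\,H}(\bar b)=\psi(g)\}|$. The bijection $\phi\times\cdots\times\phi\colon (G/Z(G))^{n+1}\to (H/Z(H))^{n+1}$ together with the intertwining identity and the injectivity of $\psi$ shows that $\overline{w}_{n+1}^{\,G}(\bar a)=g$ if and only if $\overline{w}_{n+1}^{\,H}(\phi^{\times(n+1)}(\bar a))=\psi(g)$, so the two bracketed cardinalities coincide, say equal to $N$. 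Thus $\zeta^{w_{n+1}}_G(g)=|Z(G)|^{n+1}N$ and $\zeta^{w_{n+1}}_H(\psi(g))=|Z(H)|^{n+1}N$, and taking the ratio gives $(|Z(G)|/|Z(H)|)^{n+1}$. Finally, since $\phi$ is an isomorphism $G/Z(G)\to H/Z(H)$ we have $|G|/|Z(G)|=|H|/|Z(H)|$, whence $|Z(G)|/|Z(H)|=|G|/|H|$ and the claimed formula follows. (Here I read the $\phi(g)$ of the statement as $\psi(g)\in\gamma_{n+1}(H)$, the image under the commutator-subgroup isomorphism, which is the only interpretation making $\zeta^{w_{n+1}}_H(\phi(g))$ meaningful.)
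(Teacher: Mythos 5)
Your proof is correct, and its skeleton --- descend to a central quotient on which the isoclinism data become genuine isomorphisms, transport the solution set, and compare coset sizes --- is the same as the paper's. The difference lies in which quotient you use and in what you prove versus assume. The paper silently upgrades the given $1$-isoclinism to an $n$-isoclinism (leaning on its unproved remark that an $n$-isoclinism induces an $(n+1)$-isoclinism), works modulo $Z_n(G)$ and $Z_n(H)$, and reads the compatibility of $a_{(n+1,G)}$ with $a_{(n+1,H)}$ straight off Definition~\ref{def:n-isoclinic}; the factor $(|G|/|H|)^{n+1}$ then comes from $|Z_n(G)|/|Z_n(H)|=|G|/|H|$. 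You instead stay with the ordinary centers, prove that $w_{n+1}$ factors through $(G/Z(G))^{n+1}$, and establish the intertwining $\psi(w_{n+1}(g_1,\dots,g_{n+1}))=w_{n+1}(h_1,\dots,h_{n+1})$ by induction, the crux being the identity $\phi(cZ(G))=\psi(c)Z(H)$ for $c\in G'$, which you verify by comparing two homomorphisms $G'\to H/Z(H)$ on the generating set of commutators. This lemma is exactly the content of the paper's ``it is clear from the definition'' remark, so your version is the more self-contained one: it derives the theorem honestly from the stated hypothesis (a $1$-isoclinism) rather than from the stronger $n$-isoclinism the paper's proof actually manipulates. Both proofs also make the same necessary reading of the statement, namely that $\zeta^{w_{n+1}}_H(\phi(g))$ must mean $\zeta^{w_{n+1}}_H(\psi(g))$, since $\phi$ is only defined on $G/Z(G)$ (resp.\ $G/Z_n(G)$), not on elements of $\gamma_{n+1}(G)$.
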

\begin{proof} Let the pair $(\phi, \psi)$ be an $n$-isoclinism between $G$ and $H$. Consider the group $\overline{G}=G/Z_n(G)$ and $\overline{H}=H/Z_n(H)$. Then
\begin{align*}
\frac{1}{|Z_n(G)|^{n+1}}&\zeta^{w_{n+1}}_G(g)\\
&=\frac{1}{|Z_n(G)|^{n+1}}|\{(g_1,g_2,\cdots,g_{n+1})\in G^{n+1} ~|~ [\cdots[[g_1,g_2],g_3],\cdots, g_{n+1}]=g \}|\\
&=\frac{1}{|Z_n(G)|^{n+1}}|\{(g_1,g_2,\cdots,g_{n+1})\in G^{n+1} ~|~ a_{(n+1,G)}\big([\cdots[[\bar{g}_1,\bar{g}_2],\bar{g}_3],\cdots, \bar{g}_{n+1}]\big)=g \}|\\
&=|\{(\bar{g}_1,\bar{g}_2,\cdots,\bar{g}_{n+1})\in \overline{G}^{n+1} ~|~ a_{(n+1,G)}\big([\cdots[[\bar{g}_1,\bar{g}_2],\bar{g}_3],\cdots, \bar{g}_{n+1}]\big)=g \}|\\
&=|\{(\bar{g}_1,\bar{g}_2,\cdots,\bar{g}_{n+1})\in \overline{G}^{n+1} ~|~ \psi \big(a_{(n+1,G)}\big([\cdots[[\bar{g}_1,\bar{g}_2],\bar{g}_3],\cdots, \bar{g}_{n+1}]\big)\big)=\psi(g) \}|\\
&(\textnormal{since $\psi$ is an isomorphism})
\end{align*}
\begin{align*}
&=|\{(\bar{g}_1,\bar{g}_2,\cdots,\bar{g}_{n+1})\in \overline{G}^{n+1} ~|~ a_{(n+1,H)}\big([\cdots[[\phi(\bar{g}_1),\phi(\bar{g}_2)],\phi(\bar{g}_3)],\cdots, \phi(\bar{g}_{n+1})]\big)=\psi(g) \}|\\
&(\textnormal{by commutativity of diagram as in Definition \ref{def:n-isoclinic} })\\
&=|\{(\bar{h}_1,\bar{h}_2,\cdots,\bar{h}_{n+1})\in \overline{H}^{n+1} ~|~ a_{(n+1,H)}\big([\cdots[[\bar{h}_1,\bar{h}_2],\bar{h}_3],\cdots, \bar{h}_{n+1}]\big)=\psi(g) \}|\\
&(\textnormal{since $\phi$ is an isomorphism})\\
&=\frac{1}{|Z_n(H)|^{n+1}}|\{(h_1,h_2,\cdots,h_{n+1})\in H^{n+1} ~|~ a_{(n+1,H)}\big([\cdots[[\bar{h}_1,\bar{h}_2],\bar{h}_3],\cdots, \bar{h}_{n+1}]\big)=\psi(g) \}|\\
&=\frac{1}{|Z_n(H)|^{n+1}}|\{(h_1,h_2,\cdots,h_{n+1})\in H^{n+1} ~|~ [\cdots[[h_1,h_2],h_3],\cdots, h_{n+1}]=\psi(g) \}|\\
&=\frac{1}{|Z_n(H)|^{n+1}}\zeta^{w_{n+1}}_H(\phi(g)).
\end{align*}
Hence the result follows.
\end{proof}

The rest part of this subsection is devoted to finite Camina $p$-group. We start with the following definitions.\\
A pair $(G,H)$ is said to be a {\it Camina pair} if $1 < H < G$ is a
normal subgroup of $G$ and for every element $g\in G \setminus H$,
$gH\subseteq Cl_G(g)$. It is easy to see
that if $(G,H)$ is a Camina pair then $Z(G)\leq H \leq G{}'$. A group $G$ is called a \textit{Camina group} if $(G, G')$ is a Camina pair.
It is easy to see that if G is a Camina group then $\chi(G\setminus G{}')=0$ for every $\chi\in \nl(G)$.  A group $G$ is said to be a \textit{Generalized Camina group} if
$Cl_G(g)=gG{}'$ for all $g\in G\setminus G{}'Z(G)$.
Lewis \cite{Lewis2} showed that a finite group $G$ is a generalized Camina group if and only if $G$ is
isoclinic to a Camina group. It follows that if $G$ is a finite  nilpotent  generalized Camina group, then $G$ is isoclinic to a Camina $p$-group for some prime $p$. So, in view of Theorem \ref{thm:isoclinic}, it is enough to compute $\zeta^{w_{n}}_G(g)$ for a Camina $p$-group.
If $G$ is a finite Camina $p$-group then it was shown in \cite{Dark} that
the nilpotency class of $G$ is at most $3$.

A pair of groups $(G, N)$ where $N$ is a normal subgroup of the group $G$ is called a \textit{generalized Camina pair}
(abbreviated as GCP) if $\chi(g)=0$ for all  $\chi\in \nl(G)$ and for all $g \in G \setminus N$. Equivalent definitions of a generalized Camina pair  can be found in \cite{Lewis4}.
 The groups  with $(G,Z(G))$ a GCP were studied under the name $VZ$-groups by Lewis in \cite{Lewis1}.
If $G$ is a finite Camina $p$-group of nilpotency class $2$, then $(G,Z(G))$ is a generalized Camina pair and in this case $\zeta^{w_n}_G(g)$  can be obtained by the following theorem.
\begin{thm}\label{prop_class2}
 Let $G$ be a finite group such that  $(G,Z(G))$ is a GCP.  If $g \in G'$   then
\[
\zeta^{w_{2}}_G(g) = \begin{cases}
\frac{|G|^2}{|G'|}\left(1 - \frac{1}{|G : Z(G)|}\right) &\text{ if } g \neq 1 \\
\frac{|G|^2}{|G'|}\left(1 + \frac{|G'| - 1}{|G : Z(G)|}\right) &\text{ if } g = 1.
\end{cases}
\]
and
\[
\zeta^{w_{n}}_G(g) = \begin{cases}
0 &\text{ if } g \neq 1 \\
|G|^n &\text{ if } g = 1
\end{cases} \text{ if } n \geq 3.
\]
\end{thm}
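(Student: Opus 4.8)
The plan is to extract from the GCP hypothesis two structural facts about the conjugacy classes of $G$ and then reduce both assertions to elementary counting. First I would record the consequences of $(G,Z(G))$ being a GCP. For a non-central $g$, the vanishing $\chi(g)=0$ for all $\chi\in\nl(G)$, combined with the column orthogonality relation $\sum_{\chi\in\Irr(G)}|\chi(g)|^2=|C_G(g)|$, collapses the sum to its linear part and gives $|C_G(g)|=|\lin(G)|=|G:G'|$, hence $|Cl_G(g)|=|G'|$. Since $Cl_G(g)\subseteq gG'$ always holds and these two sets now have equal size, I get $Cl_G(g)=gG'$ for every $g\in G\setminus Z(G)$. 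From this I would deduce $G'\leq Z(G)$: if some $h\in G'\setminus Z(G)$ existed, then $Cl_G(h)=hG'=G'$ would contain $1$, forcing $h$ to be conjugate to $1$, i.e. $h=1\in Z(G)$, a contradiction. Thus $G$ has nilpotency class at most $2$ and $\gamma_3(G)=[G',G]=1$.

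The case $n\geq 3$ is then immediate. Because $[x_1,x_2]\in G'\leq Z(G)$ is central, $w_3(x_1,x_2,x_3)=[[x_1,x_2],x_3]=1$ identically, and from the recursion $w_n=[w_{n-1},x_n]$ a one-line induction shows $w_n\equiv 1$ as a map $G^n\to G$ for all $n\geq 3$. Consequently $\zeta^{w_n}_G(g)$ counts all of $G^n$ when $g=1$ and is empty otherwise, giving $|G|^n$ and $0$ respectively.

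For $n=2$ I would count the solutions of $[x,y]=g$ directly. Rewriting $[x,y]=g$ as $x^y=xg$, for each fixed $x$ the number of $y$ equals $|C_G(x)|$ when $xg\in Cl_G(x)$ and $0$ otherwise, so $\zeta^{w_2}_G(g)=\sum_{x:\,xg\in Cl_G(x)}|C_G(x)|$. I would split this sum according to whether $x$ is central. If $x\in Z(G)$ then $Cl_G(x)=\{x\}$, so such an $x$ contributes $|G|$ exactly when $g=1$, and there are $|Z(G)|$ of them. If $x\notin Z(G)$ then $Cl_G(x)=xG'$ by the first paragraph, so the condition $xg\in xG'$ holds for every $g\in G'$, and each such $x$ contributes $|C_G(x)|=|G|/|G'|$; there are $|G|-|Z(G)|$ of them. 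Assembling the pieces, for $g\neq 1$ I obtain $(|G|-|Z(G)|)\tfrac{|G|}{|G'|}=\tfrac{|G|^2}{|G'|}\bigl(1-1/|G:Z(G)|\bigr)$, and for $g=1$ the extra central term $|Z(G)|\cdot|G|$ gives $\tfrac{|G|^2}{|G'|}\bigl(1+(|G'|-1)/|G:Z(G)|\bigr)$ after simplification, exactly the claimed values.

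The only genuinely non-routine step is the first paragraph, namely turning the character-vanishing hypothesis into the conjugacy-class description $Cl_G(x)=xG'$ and the class-two conclusion $G'\leq Z(G)$; once these are in hand both parts are pure counting. Alternatively one could feed $\chi(g)=0$ on $\nl(G)$ directly into the Frobenius expansion \eqref{Frobeq} for $n=2$ and into \eqref{neweq0001} for general $n$, but the combinatorial route has the advantage of avoiding any bookkeeping of how the non-linear characters distribute over $\Irr(G')$.
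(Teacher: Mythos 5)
Your proof is correct, but it takes a genuinely different and more self-contained route than the paper's. The paper disposes of the theorem almost entirely by citation: it invokes a lemma of Lewis to get $G'\leq Z(G)$ (hence nilpotency class $2$ and the $n\geq 3$ case), and then derives the $n=2$ formula from Corollary 2.3 of Pournaki--Sobhani together with the fact that $cd(G)=\{1,|G:Z(G)|^{1/2}\}$, i.e.\ it stays entirely on the character-theoretic side and feeds the degree data into the Frobenius expansion \eqref{Frobeq}. You instead re-prove the needed structure from scratch: column orthogonality plus the vanishing hypothesis gives $|C_G(x)|=|G:G'|$ and hence $Cl_G(x)=xG'$ for noncentral $x$, from which $G'\leq Z(G)$ follows by your coset argument, and the $n=2$ count is then done combinatorially by summing $|C_G(x)|$ over the $x$ with $xg\in Cl_G(x)$. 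All steps check out, including the inclusion $Cl_G(x)\subseteq xG'$ and the final arithmetic (both values of $\zeta^{w_2}_G$ simplify to the stated expressions). What your approach buys is independence from the cited results and a transparent combinatorial meaning for each term in the formula; what the paper's approach buys is brevity and a template that extends to the other families treated later, where the analogous computations are carried out through character sums rather than conjugacy classes. One cosmetic remark: your argument actually establishes the class-$2$ conclusion and the $VZ$-group class structure simultaneously, which is slightly more than the paper records at this point.
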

\begin{proof}
If $(G,Z(G))$ is a GCP then by \cite[Lemma 2.4]{Lewis4} we have that $G'$ is a subgroup of $Z(G)$. That is nilpotency class of $G$ is $2$. Hence the case when $n \geq 3$ follows.

The case when $n = 2$ follows from  \cite[Corollary 2.3]{mP04} and the fact that $cd(G)=\{1, |G:Z(G)|^{1/2} \}$ (which follows from \cite[Section 3]{SR}).
\end{proof}


Now we discuss about finite Camina $p$-group of nilpotancy class $3$. We begin with the following remark which follows from \cite[Section 3]{sunil3}.
\begin{remark}\label{remark:caminapgroup3}
 Let $G$ be a finite Camina $p$-group of nilpotancy class $3$. Then
\begin{enumerate}
\item $G/G{}'$, $G{}'/Z(G)$ and $\gamma_3(G) = [G, G'] = Z(G)$ are elementary abelian $p$-group with $|G : G{}'|=p^{2m}$, $|G{}' : Z(G)|=p^m$ and $|G : Z(G)|=p^{3m}$,
where $m$ is even.
\item $\nl(G)=\Irr(G|Z(G)) \sqcup ~ \nl(G/Z(G))$,
$cd(G)=\{1,p^m,p^{3m/2}\}$, $|\Irr(G|Z(G))|=|Z(G)|-1$ and $|\nl(G/Z(G))|=\frac{|G{}'|}{|Z(G)|}-1$.
\item If $\chi\in \Irr(G|Z(G))$ then $\chi(G\setminus Z(G))=0$ and $\chi{\downarrow}_{Z(G)}=p^{3m/2}\lambda$, where $\lambda\in \Irr(Z(G))$.
\item If $\chi\in \nl(G/Z(G))$ then $\chi(G\setminus G{}')=0$ and $\chi(g)=p^{m}(\lambda\circ \eta)(g)$, where $g\in G{}', \, \lambda\in \Irr(G{}'/Z(G))$
and $\eta:G\longrightarrow G/\gamma_3(G)$ is the natural homomorphism.
\end{enumerate}
\end{remark}
Now, using Remark \ref{remark:caminapgroup3} Part (3) and (4), we get the following result.
\begin{prop}\label{lemma:camina3}
Let $G$ be a Camina $p$-group of nilpotancy class $3$. Then
\begin{eqnarray*}\label{linearsum}
C^{w_3}(\chi) = \left\{
           \begin{array}{ll}
            \langle \zeta^{w_{2}}_G\downarrow_{Z(G)}, 1_{Z(G)}\rangle & \hbox{if $\chi\in \Irr(G|Z(G))$,} \\
            \langle \zeta^{w_{2}}_G\downarrow_{G{}'}, 1_{G{}'}\rangle & \hbox{if $\chi\in \nl(G/Z(G))$.}
           \end{array}
         \right.
\end{eqnarray*}
\end{prop}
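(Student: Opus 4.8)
The plan is to go straight from the definition $C^{w_3}(\chi)=\langle \zeta^{w_2}_G\chi,\chi\rangle$ and to exploit the vanishing and restriction data recorded in Remark \ref{remark:caminapgroup3}. Expanding the inner product over $G$ gives
\[
C^{w_3}(\chi)=\frac{1}{|G|}\sum_{g\in G}\zeta^{w_2}_G(g)\,\chi(g)\overline{\chi(g)}=\frac{1}{|G|}\sum_{g\in G}\zeta^{w_2}_G(g)\,|\chi(g)|^2,
\]
so the entire computation reduces to two questions: on which set is $\chi$ supported, and what constant value does $|\chi(g)|^2$ take there. For the nonlinear characters of a Camina $p$-group of class $3$ both answers are supplied verbatim by Remark \ref{remark:caminapgroup3}.

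First I would treat the case $\chi\in \Irr(G|Z(G))$. By part (3) of the Remark, $\chi(G\setminus Z(G))=0$, so only the terms with $g\in Z(G)$ survive in the sum above. On $Z(G)$ the same part gives $\chi\downarrow_{Z(G)}=p^{3m/2}\lambda$ with $\lambda\in\Irr(Z(G))$ linear, whence $|\chi(g)|^2=p^{3m}|\lambda(g)|^2=p^{3m}=\chi(1)^2$ for every $g\in Z(G)$. Pulling this constant out leaves
\[
C^{w_3}(\chi)=\frac{p^{3m}}{|G|}\sum_{g\in Z(G)}\zeta^{w_2}_G(g),
\]
and the index relation $|G:Z(G)|=p^{3m}$ from part (1) turns the prefactor $p^{3m}/|G|$ into $1/|Z(G)|$, so the right-hand side is exactly $\langle \zeta^{w_2}_G\downarrow_{Z(G)},1_{Z(G)}\rangle$.

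The case $\chi\in\nl(G/Z(G))$ is entirely parallel, using part (4) in place of part (3). There $\chi(G\setminus G{}')=0$, which restricts the sum to $g\in G{}'$, and $\chi(g)=p^m(\lambda\circ\eta)(g)$ with $\lambda\in\Irr(G{}'/Z(G))$ linear and $\eta$ the natural map onto $G/\gamma_3(G)=G/Z(G)$; hence $|\chi(g)|^2=p^{2m}=\chi(1)^2$ for all $g\in G{}'$. As before this gives $C^{w_3}(\chi)=(p^{2m}/|G|)\sum_{g\in G{}'}\zeta^{w_2}_G(g)$, and now the index $|G:G{}'|=p^{2m}$ from part (1) converts $p^{2m}/|G|$ into $1/|G{}'|$, yielding $\langle \zeta^{w_2}_G\downarrow_{G{}'},1_{G{}'}\rangle$.

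There is no genuine difficulty here; the only thing to watch is the bookkeeping in the last step of each case, namely that the normalization constant $\chi(1)^2/|G|$ produced by $|\chi(g)|^2$ must be matched against the index $|G:Z(G)|$ (respectively $|G:G{}'|$) so that it collapses cleanly to $1/|Z(G)|$ (respectively $1/|G{}'|$). This matching is precisely where the numerical data $cd(G)=\{1,p^m,p^{3m/2}\}$ and the index relations in part (1) of Remark \ref{remark:caminapgroup3} enter, and it is what makes the two restricted inner products over $Z(G)$ and $G{}'$ come out as the asserted answer.
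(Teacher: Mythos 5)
Your proof is correct and is exactly the argument the paper intends: the paper gives no written proof beyond the remark that the proposition "follows from Remark \ref{remark:caminapgroup3} Parts (3) and (4)," and your computation—restricting the sum defining $\langle \zeta^{w_2}_G\chi,\chi\rangle$ to the support of $\chi$, using $|\chi(g)|^2=\chi(1)^2$ there, and cancelling $\chi(1)^2/|G|$ against the index $|G:Z(G)|$ or $|G:G'|$—is precisely the omitted verification.
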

\begin{thm}\label{Camina-p_class3}
Let $G$ be a finite Camina $p$-group of nilpotancy class $3$. Then
\begin{eqnarray*}
\begin{aligned}
\zeta^{w_3}_G {} =& |G|^{2}\sum_{\chi\in \lin(G)} \chi+
\bigg(\frac{|G|^{3}}{|G{}'|}+\frac{|G|^2(|G'| - |Z(G)|)}{|Z(G)|}\bigg)\sum_{\chi\in \Irr(G|Z(G))}\frac{\chi}{\chi(1)}\\
&+ \frac{|G|^{3}}{|G{}'|}\sum_{\chi\in \nl(G/Z(G))}\frac{\chi}{\chi(1)}.
\end{aligned}
\end{eqnarray*}
\end{thm}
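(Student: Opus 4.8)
The plan is to specialize the master formula \eqref{neweq0001} to $n=3$ and then to evaluate the two values of $C^{w_3}(\chi)$ provided by Proposition \ref{lemma:camina3}. Setting $n=3$ in \eqref{neweq0001} gives
\[
\zeta^{w_3}_G = |G|^2\sum_{\chi\in\lin(G)}\chi + |G|\sum_{\chi\in\nl(G)}\frac{C^{w_3}(\chi)\,\chi}{\chi(1)},
\]
so the linear part already matches the first summand of the asserted identity. For the nonlinear part I would invoke the disjoint decomposition $\nl(G)=\Irr(G|Z(G))\sqcup\nl(G/Z(G))$ from Remark \ref{remark:caminapgroup3}(2) to split the sum into the two families appearing in the statement, and then read off $C^{w_3}(\chi)$ on each family from Proposition \ref{lemma:camina3}, namely $C^{w_3}(\chi)=\langle\zeta^{w_2}_G\downarrow_{Z(G)},1_{Z(G)}\rangle$ on $\Irr(G|Z(G))$ and $C^{w_3}(\chi)=\langle\zeta^{w_2}_G\downarrow_{G'},1_{G'}\rangle$ on $\nl(G/Z(G))$.

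The heart of the argument is thus the evaluation of these two inner products, which are constant across the respective families and hence factor out of the sums. For each I would substitute the Frobenius expansion $\zeta^{w_2}_G=\sum_{\chi\in\Irr(G)}\frac{|G|}{\chi(1)}\chi$ of \eqref{Frobeq} and interchange summation, reducing each inner product to a weighted sum over $\chi\in\Irr(G)$ of the local multiplicities $\langle\chi\downarrow_{Z(G)},1_{Z(G)}\rangle$ and $\langle\chi\downarrow_{G'},1_{G'}\rangle$. The decisive point, extracted from Remark \ref{remark:caminapgroup3}(3)--(4) together with the Camina inclusion $Z(G)\le G'$, is that orthogonality kills most terms: a linear character is trivial on $G'\supseteq Z(G)$ and so contributes its full weight, a character of $\Irr(G|Z(G))$ restricts to $Z(G)$ as $p^{3m/2}$ times a nontrivial $\lambda\in\Irr(Z(G))$ and hence contributes $0$, and a character of $\nl(G/Z(G))$ restricts to $G'$ as $p^m$ times a nontrivial character of $G'/Z(G)$ and hence contributes $0$ to the second inner product (while on $Z(G)$, being inflated from $G/Z(G)$, it contributes its degree). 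In short, over $Z(G)$ only $\lin(G)$ and $\nl(G/Z(G))$ survive, whereas over $G'$ only $\lin(G)$ survives.

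Carrying out the resulting count with $|\lin(G)|=|G:G'|$ and $|\nl(G/Z(G))|=|G':Z(G)|-1$ from Remark \ref{remark:caminapgroup3}, I expect to obtain
\[
\langle\zeta^{w_2}_G\downarrow_{Z(G)},1_{Z(G)}\rangle = |G|\bigl(|G:G'|+|G':Z(G)|-1\bigr),\qquad
\langle\zeta^{w_2}_G\downarrow_{G'},1_{G'}\rangle = |G|\,|G:G'|.
\]
Rewriting $|G:G'|=|G|/|G'|$ and $|G':Z(G)|=|G'|/|Z(G)|$, and then multiplying each by the outer factor $|G|$ coming from the nonlinear part of \eqref{neweq0001}, reproduces exactly the coefficients $\tfrac{|G|^3}{|G'|}+\tfrac{|G|^2(|G'|-|Z(G)|)}{|Z(G)|}$ attached to $\Irr(G|Z(G))$ and $\tfrac{|G|^3}{|G'|}$ attached to $\nl(G/Z(G))$, completing the proof.

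The main obstacle I anticipate lies entirely in the second paragraph: correctly deciding, for each of the three families of irreducible characters, whether its restriction to $Z(G)$ and to $G'$ carries a nonzero trivial component, and justifying the vanishing in the remaining cases. Everything here hinges on the precise restriction data of Remark \ref{remark:caminapgroup3}(3)--(4) and on $Z(G)\le G'$; once those vanishing statements are secured, the rest is a routine enumeration of characters and a substitution of the orders listed in Remark \ref{remark:caminapgroup3}(1).
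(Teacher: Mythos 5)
Your proposal is correct and follows essentially the same route as the paper: specialize \eqref{neweq0001} to $n=3$, split $\nl(G)$ via Remark \ref{remark:caminapgroup3}, pull out the two constants from Proposition \ref{lemma:camina3}, and evaluate $\langle\zeta^{w_2}_G\downarrow_{Z(G)},1_{Z(G)}\rangle$ and $\langle\zeta^{w_2}_G\downarrow_{G'},1_{G'}\rangle$ by feeding the Frobenius expansion through the restriction data, with exactly the vanishing pattern you describe. Your two intermediate values agree with the paper's $\frac{|G|^2}{|G'|}+\frac{|G|(|G'|-|Z(G)|)}{|Z(G)|}$ and $\frac{|G|^2}{|G'|}$, so the argument goes through as planned.
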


\begin{proof}
By Remark \ref{remark:caminapgroup3}  part (2) we have  $\nl(G)=\Irr(G|Z(G)) \sqcup ~ \nl(G/Z(G))$. Therefore, using \eqref{neweq0001}, we have
\begin{eqnarray*}
\begin{aligned}
\zeta^{w_3}_G {}= |G|^2\sum_{\chi\in \lin(G)} ~\chi+ |G|\sum_{\chi\in \Irr(G|Z(G))}\frac{C^{w_3}(\chi) ~\chi}{\chi(1)}
+|G|\sum_{\chi\in \nl(G/Z(G))}\frac{C^{w_3}(\chi) ~\chi}{\chi(1)}.
\end{aligned}
\end{eqnarray*}
By  Proposition \ref{lemma:camina3} the above expression becomes
\begin{eqnarray}\label{eqn:camina3fn1}
\begin{aligned}
\zeta^{w_3}_G {} =& |G|^{2}\sum_{\chi\in \lin(G)} \chi+
\langle \zeta^{w_{2}}_G\downarrow_{Z(G)}, 1_{Z(G)}\rangle |G| \sum_{\chi\in \Irr(G|Z(G))}\frac{\chi}{\chi(1)}  \\
&+\langle \zeta^{w_{2}}_G\downarrow_{G{}'}, 1_{G{}'}\rangle |G| \sum_{\chi\in \nl(G/Z(G))}\frac{\chi}{\chi(1)}.
\end{aligned}
\end{eqnarray}
Now,  using Remark \ref{remark:caminapgroup3} part (3) and  (4), and the the expression of $\zeta^{w_{2}}_G$, we get

\begin{eqnarray*}\label{eqn:coffeicientcenter}
\begin{aligned}
\langle \zeta^{w_2}_G\downarrow_{Z(G)}, 1_{Z(G)}\rangle {} =& |G|\langle \varphi\downarrow_{Z(G)}, 1_{Z(G)}\rangle
+|G| \sum_{\lambda\in \Irr(Z(G))\setminus \{1_{Z(G)}\}} \langle \lambda, 1_{Z(G)}\rangle \\
&+|G||\nl(G/Z(G))|.
\end{aligned}
\end{eqnarray*}
where $\varphi = \sum_{\chi\in \lin(G)} \chi$. Since $Z(G) \subseteq G'$ and $\varphi$ is a regular character of $G/G'$, $\langle \varphi\downarrow_{Z(G)}, 1_{Z(G)}\rangle = |G|/|G'|$. Also, $\langle \lambda, 1_{Z(G)}\rangle = 0$ for any $ 1_{Z(G)} \ne \lambda \in \Irr(Z(G))$. Hence,
\[
\langle \zeta^{w_2}_G\downarrow_{Z(G)}, 1_{Z(G)}\rangle {} = \frac{|G|^2}{|G{}'|}+|G||\nl(G/Z(G))| = \frac{|G|^2}{|G{}'|}+ \frac{|G|(|G'| - |Z(G)|)}{|Z(G)|}.
\]
We have
\begin{eqnarray*}
\begin{aligned}
\zeta^{w_2}_G\downarrow_{G{}'} {} =& |G|\sum_{\chi\in \lin(G)}\chi{\downarrow}_{G{}'}+
|G|\sum_{\chi\in \Irr(G|Z(G))} \frac{\chi\downarrow_{G{}'}}{\chi(1)}
+|G|\sum_{\chi\in \nl(G/Z(G))}\frac{\chi\downarrow_{G{}'}}{\chi(1)}.
\end{aligned}
\end{eqnarray*}
 It is easy to see that $\langle \chi{\downarrow}_{G{}'}, 1_{G{}'}\rangle= \langle \phi{\downarrow}_{G{}'}, 1_{G{}'}\rangle = 0$ for
 $\chi\in \Irr(G|Z(G))$ and $\phi\in \nl(G/Z(G))$. Therefore,
$\langle \zeta^{w_2}_G\downarrow_{G{}'}, 1_{G{}'}\rangle=|G|^2/|G{}'|$.
Now putting the values of $\langle \zeta^{w_2}_G\!\!\!\downarrow_{Z(G)}, 1_{Z(G)}\rangle$ and $\langle \zeta^{w_2}_G\downarrow_{G{}'}, 1_{G{}'}\rangle$  in  \eqref{eqn:camina3fn1}  we  get the required expression for $\zeta^{w_3}_G$.
\end{proof}
\noindent The following lemma is useful in proving the next result.
\begin{lem}\label{new_camina_p-gp_1}
Let $G$ be a finite Camina p-group of nilpotancy class $3$. Then
\[
\sum_{\chi\in \Irr(G|Z(G))}\frac{\chi(g)}{\chi(1)} = \begin{cases}
-1 &\text{ if } 1 \neq g \in \gamma_3(G)\\
\;\;\, 0  &\text{ if } g \in G' \setminus \gamma_3(G)
\end{cases}
\]
and
\[
\sum_{\chi\in \nl(G/Z(G))}\frac{\chi(g)}{\chi(1)} = \begin{cases}
\frac{|G'| - |Z(G)|}{|Z(G)|} &\text{ if } 1 \neq g \in \gamma_3(G)\\
 -1  &\text{ if } g \in G' \setminus \gamma_3(G).
\end{cases}
\]
\end{lem}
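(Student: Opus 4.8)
The plan is to evaluate each sum by rewriting the ratio $\chi(g)/\chi(1)$ as an ordinary value of a \emph{linear} character, using the explicit restriction data recorded in Remark~\ref{remark:caminapgroup3}, and then invoking column orthogonality in the abelian groups $Z(G)$ and $G'/Z(G)$. Throughout I use the identity $\gamma_3(G) = [G,G'] = Z(G)$ from part~(1) of that remark, so that the two cases $1 \neq g \in \gamma_3(G)$ and $g \in G' \setminus \gamma_3(G)$ read as $1 \neq g \in Z(G)$ and $g \in G' \setminus Z(G)$ respectively.

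First I treat $\sum_{\chi \in \Irr(G|Z(G))} \chi(g)/\chi(1)$. By Remark~\ref{remark:caminapgroup3}(3), every $\chi \in \Irr(G|Z(G))$ satisfies $\chi\downarrow_{Z(G)} = \chi(1)\lambda_\chi$ with $\lambda_\chi \in \Irr(Z(G))$ nontrivial (nontrivial since $\chi \notin \Irr(G/Z(G))$), so $\chi(g)/\chi(1) = \lambda_\chi(g)$ for $g \in Z(G)$, while the same part gives $\chi(g) = 0$ for $g \in G \setminus Z(G)$. The case $g \in G' \setminus Z(G)$ therefore contributes $0$ at once. For $1 \neq g \in Z(G)$, I claim $\chi \mapsto \lambda_\chi$ is a bijection from $\Irr(G|Z(G))$ onto the nontrivial characters of $Z(G)$: both sets have size $|Z(G)| - 1$ by Remark~\ref{remark:caminapgroup3}(2), and since $Z(G)$ is central each nontrivial $\lambda$ has a single irreducible constituent above it, because the two-sided ideal $\mathbb{C}[G]e_\lambda$ has dimension $[G:Z(G)] = \chi(1)^2$ and hence accommodates exactly one such $\chi$. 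Consequently $\sum_{\chi} \chi(g)/\chi(1) = \sum_{1 \neq \lambda \in \Irr(Z(G))} \lambda(g) = \left( \sum_{\lambda \in \Irr(Z(G))} \lambda(g) \right) - 1 = -1$, the last equality by column orthogonality in the abelian group $Z(G)$ since $g \neq 1$.

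The second sum is handled analogously via Remark~\ref{remark:caminapgroup3}(4). For $\chi \in \nl(G/Z(G))$ that part gives $\chi(g) = p^m(\lambda_\chi \circ \eta)(g)$ with $\chi(1) = p^m$, $\lambda_\chi \in \Irr(G'/Z(G))$, and $\eta \colon G \to G/\gamma_3(G) = G/Z(G)$ the natural map; hence $\chi(g)/\chi(1) = \lambda_\chi(gZ(G))$ for $g \in G'$. Comparing the count $|\nl(G/Z(G))| = |G'/Z(G)| - 1$ from Remark~\ref{remark:caminapgroup3}(2) with the number of nontrivial characters of the elementary abelian group $G'/Z(G)$ shows again that $\chi \mapsto \lambda_\chi$ runs bijectively over those nontrivial characters. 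If $1 \neq g \in Z(G)$ then $gZ(G)$ is trivial, each $\lambda_\chi(gZ(G)) = 1$, and the sum equals $|G'/Z(G)| - 1 = (|G'| - |Z(G)|)/|Z(G)|$; if $g \in G' \setminus Z(G)$ then $gZ(G) \neq 1$ in $G'/Z(G)$ and column orthogonality yields $\sum_{1 \neq \lambda} \lambda(gZ(G)) = -1$.

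The substitutions from Remark~\ref{remark:caminapgroup3} and the two orthogonality evaluations are routine; the one point needing care is the bijectivity of $\chi \mapsto \lambda_\chi$ in each sum, which I expect to be the main (though mild) obstacle. It follows either from the uniform degrees recorded in Remark~\ref{remark:caminapgroup3} together with the matching cardinality counts, so that injectivity or surjectivity alone forces a bijection, or directly from the observation that over an abelian central subgroup a nontrivial central character admits a unique irreducible character above it precisely because all the relevant irreducibles share one degree.
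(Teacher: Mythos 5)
Your proposal is correct and follows essentially the same route as the paper: the paper's proof is the one-line citation ``Follows from Remark \ref{remark:caminapgroup3} part (3) and (4)'', and you have simply filled in the intended details---rewriting $\chi(g)/\chi(1)$ as a linear character value, establishing the bijections $\chi\mapsto\lambda_\chi$ onto the nontrivial characters of $Z(G)$ and of $G'/Z(G)$ via the cardinality counts in part (2), and finishing with column orthogonality in those elementary abelian groups.
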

\begin{proof}
Follows from Remark \ref{remark:caminapgroup3} part (3) and (4).
\end{proof}
\noindent We conclude this subsection with the following result.
\begin{thm}
Let $G$ be a finite Camina p-group of nilpotancy class $3$. If $g \in G'$ then
\[
\zeta^{w_{2}}_G(g) = \begin{cases}
\frac{|G|(|G| - |G'|)}{|G'|} + \frac{|G|(|G'| - |Z(G)|)}{|Z(G)|} &\text{ if } 1 \neq g \in \gamma_3(G) \\
\frac{|G|(|G| - |G'|)}{|G'|}  &\text{ if } g \in G' \setminus \gamma_3(G)\\
\frac{|G|^2}{|G'|} +\frac{|G||G'|}{|Z(G)|} + |G|(|Z(G)| - 2) &\text{ if } g = 1
\end{cases}
\]

and
\[
\zeta^{w_{3}}_G(g) = \begin{cases}
 \frac{|G|^2(|G'| - |Z(G)|)(|G| - |G'|)}{|G'||Z(G)|}  & \text{ if } 1 \neq g \in \gamma_3(G) \\
 0 \hspace{6.3cm} & \text{ if } g \in G' \setminus \gamma_3(G) \\
\frac{|G|^3(|Z(G)|^2 + |G'|  - 1)}{|Z(G)||G'|} + \frac{|G|^2(|G'| - |Z(G)|)(|Z(G)| - 1)}{|Z(G)|} &\text{ if } g = 1.
\end{cases}
\]
\end{thm}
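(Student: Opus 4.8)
The plan is to evaluate both solution-counting functions pointwise by reducing each to a weighted sum of the values $\chi(g)/\chi(1)$ over $\Irr(G)$, and then feeding in the orthogonality-type sums already recorded in Lemma~\ref{new_camina_p-gp_1} together with the character counts in Remark~\ref{remark:caminapgroup3}. Throughout I would use the decomposition $\Irr(G)=\lin(G)\sqcup\Irr(G|Z(G))\sqcup\nl(G/Z(G))$ from Remark~\ref{remark:caminapgroup3}~part~(2), and the elementary observation that since $g\in G'$ every linear character satisfies $\chi(g)=1$, so that $\sum_{\chi\in\lin(G)}\chi(g)=|\lin(G)|=|G:G'|=|G|/|G'|$. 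This handles the linear block uniformly in all three branches.

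For $\zeta^{w_2}_G$ I would start from the Frobenius identity \eqref{Frobeq}, which gives $\zeta^{w_2}_G(g)=|G|\sum_{\chi\in\Irr(G)}\chi(g)/\chi(1)$. Splitting the sum over the three families and inserting the linear contribution above reduces matters to the two nonlinear sums $\sum_{\chi\in\Irr(G|Z(G))}\chi(g)/\chi(1)$ and $\sum_{\chi\in\nl(G/Z(G))}\chi(g)/\chi(1)$. For $1\neq g\in\gamma_3(G)$ and for $g\in G'\setminus\gamma_3(G)$ these are read off directly from Lemma~\ref{new_camina_p-gp_1}, and collecting terms produces the first two branches. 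For $g=1$ the two sums collapse to the cardinalities $|\Irr(G|Z(G))|=|Z(G)|-1$ and $|\nl(G/Z(G))|=|G'|/|Z(G)|-1$ from Remark~\ref{remark:caminapgroup3}~part~(2), and consolidation gives the third branch.

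For $\zeta^{w_3}_G$ the same bookkeeping applies, but now starting from the explicit formula established in Theorem~\ref{Camina-p_class3} rather than from Frobenius. Evaluating that formula at $g$, the leading coefficient $|G|^2$ of the linear block again contributes $|G|^3/|G'|$; the $\Irr(G|Z(G))$-block carries the coefficient $\frac{|G|^3}{|G'|}+\frac{|G|^2(|G'|-|Z(G)|)}{|Z(G)|}$ and the $\nl(G/Z(G))$-block carries $|G|^3/|G'|$. Substituting the values from Lemma~\ref{new_camina_p-gp_1} yields the first branch after a genuine cancellation that annihilates the leading $|G|^3/|G'|$ terms, and immediately gives the vanishing in the second branch (since $g\in G'\setminus\gamma_3(G)$ sends the $\Irr(G|Z(G))$-sum to $0$ and the $\nl(G/Z(G))$-sum to $-1$); the $g=1$ branch again follows from the same two character counts.

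The argument is essentially mechanical once the decomposition and Lemma~\ref{new_camina_p-gp_1} are in hand, so I do not expect a conceptual obstacle; the only delicate point is the algebraic consolidation in the $g=1$ case, where several $|G|^3/|G'|$ and $|G|^3/|Z(G)|$ terms must be combined over the common denominator $|G'||Z(G)|$. As an independent check on this case I would verify the normalization $\sum_{g\in G}\zeta^{w_n}_G(g)=|G|^n$: since the image of $w_3$ lies in $\gamma_3(G)=Z(G)$ and that of $w_2$ in $G'$, this sum runs only over $Z(G)$ (respectively over $G'$), so the already-computed non-identity values pin down $\zeta^{w_n}_G(1)$ and confirm the closed form obtained from the character sums.
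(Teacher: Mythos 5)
Your plan coincides with the paper's own proof for $n=3$: evaluate the closed form of Theorem \ref{Camina-p_class3} at $g$, substitute the two nonlinear sums from Lemma \ref{new_camina_p-gp_1}, and use the counts $|\Irr(G|Z(G))|=|Z(G)|-1$, $|\nl(G/Z(G))|=|G'|/|Z(G)|-1$ from Remark \ref{remark:caminapgroup3} part (2); for $n=2$, $g=1$ it is also the paper's argument via $\zeta^{w_2}_G(1)=|G|\,|\Irr(G)|$. The one genuine divergence is at $n=2$, $g\neq 1$, where the paper simply cites \cite[Proposition 5.4]{nY2015} while you rederive both branches from the Frobenius formula \eqref{Frobeq} together with Lemma \ref{new_camina_p-gp_1}. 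Your route is self-contained, costs two lines of arithmetic, and reproduces the stated values, so it is a harmless (arguably preferable) substitution.

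One caution about the ``delicate consolidation'' you flagged at $g=1$, $n=3$. Carrying out your computation gives
\begin{equation*}
\zeta^{w_3}_G(1)=\frac{|G|^3}{|G'|}\Bigl(1+(|Z(G)|-1)+\tfrac{|G'|-|Z(G)|}{|Z(G)|}\Bigr)+\frac{|G|^2(|G'|-|Z(G)|)(|Z(G)|-1)}{|Z(G)|},
\end{equation*}
whose first summand equals $\frac{|G|^3(|Z(G)|^2+|G'|-|Z(G)|)}{|G'||Z(G)|}$, whereas the theorem (and the paper's proof) print $|Z(G)|^2+|G'|-1$ in place of $|Z(G)|^2+|G'|-|Z(G)|$. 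Your own proposed normalization check resolves this: $\zeta^{w_3}_G$ is supported on $\gamma_3(G)=Z(G)$, so subtracting $(|Z(G)|-1)$ times the (correct) nonidentity value $\frac{|G|^2(|G'|-|Z(G)|)(|G|-|G'|)}{|G'||Z(G)|}$ from $|G|^3$ returns exactly the expression above, not the printed one. So your method is right, but a careful execution of it will disagree with the displayed $g=1$, $n=3$ formula by $\frac{|G|^3(|Z(G)|-1)}{|G'||Z(G)|}$; the discrepancy is an arithmetic slip in the paper's final consolidation, and the sanity check you proposed is precisely the tool that catches it.
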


\begin{proof}
Let $n = 2$. If $g = 1$ then \eqref{Frobeq} gives $\zeta^{w_{n}}_G(1) = |G||\Irr(G)|$. Hence, the result follows from Remark \ref{remark:caminapgroup3} part (2). The case when $g \ne 1$ follows from   \cite[Proposition 5.4]{nY2015} noting that $\gamma_3(G) = Z(G)$.

Let  $n=3$.  If $g=1$ then by Theorem \ref{Camina-p_class3} we have
\begin{eqnarray*}
\begin{aligned}
\zeta^{w_n}_G(1) {} =& |G|^{2}\frac{|G|}{|G{}'|}+
\bigg(\frac{|G|^{3}}{|G{}'|}+\frac{|G|^2(|G'| - |Z(G)|)}{|Z(G)|}\bigg)|\Irr(G|Z(G))|
+ \frac{|G|^{3}}{|G{}'|}|\nl(G/Z(G))|\\
=& \frac{|G|^3(|Z(G)|^2 + |G'|  - 1)}{|Z(G)||G'|} + \frac{|G|^2(|G'| - |Z(G)|)(|Z(G)| - 1)}{|Z(G)|},
\end{aligned}
\end{eqnarray*}
using Remark \ref{remark:caminapgroup3} part (2). If $1\neq g\in G'$ then  $\sum_{\chi\in \lin(G)} \chi(g) = |G|/|G'|$. Therefore, by Theorem \ref{Camina-p_class3}, we have
\begin{equation}\label{new_eq_camina_p_1}
\zeta^{w_n}_G(g) {} = \frac{|G|^{3}}{|G{}'|} +
\bigg(\frac{|G|^{3}}{|G{}'|}+\frac{|G|^2(|G'| - |Z(G)|)}{|Z(G)|}\bigg)\sum_{\chi\in \Irr(G|Z(G))}\frac{\chi(g)}{\chi(1)}
+ \frac{|G|^{3}}{|G{}'|}\sum_{\chi\in \nl(G/Z(G))}\frac{\chi(g)}{\chi(1)}
\end{equation}
If $1\neq g\in \gamma_3(G) = Z(G)$ then by  \eqref{new_eq_camina_p_1} and Lemma \ref{new_camina_p-gp_1}, we have
\begin{eqnarray*}
\begin{aligned}
\zeta^{w_n}_G(g) {}
=&\frac{|G|^3}{|G{}'|}+
\bigg(\frac{|G|^{3}}{|G{}'|}+\frac{|G|^2(|G'| - |Z(G)|)}{|Z(G)|}\bigg)(-1)
+ \frac{(|G|^{3}|G'| - |Z(G)|)}{|G{}'||Z(G)|}\\
=& \frac{|G|^2(|G'| - |Z(G)|)(|G| - |G'|)}{|G'||Z(G)|},
\end{aligned}
\end{eqnarray*}
If $g\in G{}'\setminus \gamma_3(G)$ then by   \eqref{new_eq_camina_p_1} and Lemma \ref{new_camina_p-gp_1}, we have $\zeta^{w_n}_G(g) = 0$.
This completes the proof.
\end{proof}

\subsection{Groups for which $(G,Z(G))$ is a Camina pair and $(G/Z(G), Z(G/Z(G)))$ is a GCP}
In \cite{Lewis3}, Lewis began the study of those groups $G$ for
which $(G,Z(G))$ is a Camina pair and, proved that such a group $G$
must be a $p$-group for some prime $p$. Here we quote some fact about the groups for which
$(G,Z(G))$ is a Camina pair \cite[Section 4]{sunil3}:

\begin{remark}\label{remark:caminapairandGCP}
 Let $G$ be a group such that $(G,Z(G))$ is a Camina pair. Then we have the followings.
\begin{enumerate}
\item If $\chi\in \Irr(G|Z(G))$, then $\chi(G\setminus Z(G))=0$ and $\chi(1)=|G/Z(G)|^{1/2}$.
\item There is a bijection
$\Phi:\Irr(Z(G))\setminus \{1_{Z(G)}\}\longrightarrow \Irr(G|Z(G)) ~~~\textnormal{such that}$
\begin{eqnarray*}\Phi(\lambda)(g):=\left\{
\begin{array}{ll}
|G/Z(G)|^{1/2}\lambda(g) & \hbox{if $g\in Z(G)$} \\
                       0 & \hbox{otherwise.}
\end{array}
\right.
\\ \nonumber
\end{eqnarray*}
\item  $|\Irr(G|Z(G))|=|Z(G)|-1$.
\end{enumerate}
\end{remark}

Also note that if $G$ is a finite group such that $(G,Z(G))$ is a Camina pair and  $(G/Z(G), Z(G/Z(G)))$ is a generalized Camina pair then nilpotency class of $G$ is $3$.

\begin{thm}\label{thm:gcp}
Let $(G,Z(G))$ be a GCP. Then we have the following.
\begin{enumerate}
\item $cd(G)=\{1, |G:Z(G)|^{1/2} \}$.
\item The number of nonlinear irreducible characters of $G$ is $|Z(G)|-|Z(G)/G{}'|$.
\item There is a bijection
$\widehat{\Phi}: \Irr(Z(G)|G{}') \longrightarrow \nl(G)$ defined by
\begin{eqnarray*}\label{bijectiongcp}
\widehat{\Phi}(\lambda)(g):=\left\{                                                          \begin{array}{ll}
|G:Z(G)|^{1/2}\lambda(g) & \hbox{if $g\in Z(G)$,} \\
                       0 & \hbox{otherwise.}
\end{array}
\right.
\\ \nonumber
\end{eqnarray*}
\end{enumerate}
\end{thm}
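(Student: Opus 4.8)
The plan is to prove the three parts in the stated order, since part (2) rests on the degree computation of part (1) and part (3) rests on the count from part (2). Throughout I use that the GCP hypothesis $(G,Z(G))$ forces $G'\le Z(G)$, so that $G$ has nilpotency class at most $2$; this is exactly \cite[Lemma 2.4]{Lewis4}, already invoked in the proof of Theorem \ref{prop_class2}. I also use the elementary fact that for any $\chi\in\Irr(G)$ the restriction $\chi\!\downarrow_{Z(G)}$ equals $\chi(1)\mu_\chi$ for a single linear character $\mu_\chi\in\Irr(Z(G))$, because $Z(G)$ is central and hence acts by scalars in a representation affording $\chi$ (Schur's lemma). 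If $G$ is abelian all three assertions are trivial, so I assume $G$ is nonabelian.

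For part (1), take $\chi\in\nl(G)$. The GCP hypothesis says $\chi$ vanishes on $G\setminus Z(G)$, while on $Z(G)$ we have $|\chi(z)|=\chi(1)\,|\mu_\chi(z)|=\chi(1)$. Hence $1=\langle\chi,\chi\rangle=\frac{1}{|G|}\sum_{z\in Z(G)}|\chi(z)|^2=\frac{|Z(G)|\,\chi(1)^2}{|G|}$, which gives $\chi(1)=|G:Z(G)|^{1/2}$. As this holds for every nonlinear irreducible character and $G$ is nonabelian, $cd(G)=\{1,|G:Z(G)|^{1/2}\}$.

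For part (2), I count by degrees. The number of linear characters is $|G:G'|$, and by part (1) every nonlinear irreducible character has $\chi(1)^2=|G:Z(G)|$. Writing $k=|\nl(G)|$ and using $\sum_{\chi\in\Irr(G)}\chi(1)^2=|G|$ gives $|G:G'|+k\,|G:Z(G)|=|G|$, whence $k=\frac{|G|-|G:G'|}{|G:Z(G)|}=|Z(G)|-|Z(G)|/|G'|=|Z(G)|-|Z(G)/G'|$, the last equality using $G'\le Z(G)$.

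For part (3), rather than verifying directly that each $\widehat{\Phi}(\lambda)$ is irreducible, I would build the inverse map and then invoke the count from part (2). Define $\Psi\colon\nl(G)\to\Irr(Z(G))$ by $\Psi(\chi)=\mu_\chi=\frac{1}{\chi(1)}\chi\!\downarrow_{Z(G)}$. Since $\chi$ vanishes off $Z(G)$ and $\chi(1)=|G:Z(G)|^{1/2}$, comparing values on $Z(G)$ and off $Z(G)$ shows $\chi=\widehat{\Phi}(\mu_\chi)$; in particular $\Psi$ is injective. Next I check $\mu_\chi\in\Irr(Z(G)|G')$, i.e. $G'\not\le\ker\mu_\chi$: if $G'\le\ker\mu_\chi$, then for $g\in G'\le Z(G)$ we would get $\chi(g)=\chi(1)\mu_\chi(g)=\chi(1)$, forcing $G'\le\ker\chi$ and hence $\chi$ to be a character of the abelian group $G/G'$, contradicting that $\chi$ is nonlinear irreducible. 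Thus $\Psi$ maps $\nl(G)$ injectively into $\Irr(Z(G)|G')$, and since $|\Irr(Z(G)|G')|=|Z(G)|-|Z(G)/G'|$ equals $|\nl(G)|$ by part (2), $\Psi$ is a bijection whose inverse is exactly $\widehat{\Phi}$; this simultaneously shows $\widehat{\Phi}$ is well defined into $\nl(G)$ and is a bijection. The one genuine obstacle is part (3): one should resist proving $\widehat{\Phi}(\lambda)$ irreducible by a bare norm computation (which alone only yields $\pm$ an irreducible, and only after one knows $\widehat{\Phi}(\lambda)$ is a generalized character) and instead route the argument through $\Psi$, where injectivity together with the exact count from part (2) forces surjectivity for free.
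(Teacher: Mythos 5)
Your proof is correct, but it cannot be compared step-by-step with the paper's, because the paper offers no argument at all: its entire proof of this theorem is the single sentence that parts (1)--(3) follow from \cite[Section 3]{SR}. Your self-contained treatment supplies exactly the standard facts about groups whose nonlinear irreducible characters all vanish off the centre (the $VZ$-groups of Lewis mentioned just before Theorem~\ref{thm:gcp}): the norm computation $1=\langle\chi,\chi\rangle=|Z(G)|\chi(1)^{2}/|G|$ for part (1); the degree count $|G:G'|+k\,|G:Z(G)|=|G|$ for part (2), where the identification $|Z(G)|/|G'|=|Z(G)/G'|$ correctly leans on $G'\le Z(G)$, which is legitimately imported from \cite[Lemma 2.4]{Lewis4} exactly as the paper does in the proof of Theorem~\ref{prop_class2}; and, for part (3), the passage through the central character $\mu_\chi$ with $\chi=\widehat{\Phi}(\mu_\chi)$, the injectivity of $\chi\mapsto\mu_\chi$ into $\Irr(Z(G)\,|\,G')$ (your kernel argument ruling out $G'\le\ker\mu_\chi$ is right), and the cardinality match with part (2) to force surjectivity. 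Each step checks out, and your remark that one cannot deduce irreducibility of $\widehat{\Phi}(\lambda)$ from a bare norm-one computation --- since a priori $\widehat{\Phi}(\lambda)$ is not even known to be a generalized character --- is well taken; routing the argument through the inverse map and the count is the clean way to close that gap. What your approach buys over the paper's is transparency: the reader sees that the theorem is elementary character theory rather than having to consult \cite{SR}; what the citation buys the authors is brevity and consistency with the source where these facts were first recorded in this form.
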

\begin{proof}
Part $(1)$, $(2)$ and $(3)$  follow  from \cite[Section 3]{SR}.
\end{proof}
Note that if $(G,Z(G))$ is a Camina pair and $(G/Z(G), Z(G/Z(G)))$ is a
generalized Camina pair, then $Z(G)\subseteq G{}'$ and $(G/Z(G)){}'=G{}'/Z(G)\subseteq Z(G/Z(G))$. Now as a corollary of the above result we have the following.
\begin{cor}\label{lastsec_newcor}
Let $G$ be a finite group such that $(G,Z(G))$ is a Camina pair and  $(G/Z(G), Z(G/Z(G)))$  a generalized Camina pair.  Then we have the following.
\begin{enumerate}
\item There is a bijection
$\Psi:\Irr(Z(G/Z(G))~|~G{}'/Z(G))\longrightarrow \nl(G/Z(G)) ~~~\textnormal{such that}$
\begin{eqnarray*}\Psi(\chi)(g):=\left\{
\begin{array}{ll}
|G:Z_2(G)|^{1/2}\chi(g) & \hbox{if $g\in Z_2(G)/Z(G)$} \\
                         0 & \hbox{otherwise,}
\end{array}
\right.
\\ \nonumber
\end{eqnarray*}
where $\chi(1)=|G : Z_2(G)|^{1/2}$, $\chi\in \nl(G/Z(G))$.
\item  $|\nl(G/Z(G))|=\frac{|Z_2(G)|(|G'| - |Z(G)|)}{|G'||Z(G)|}$ and $\nl(G)=\Irr(G|Z(G))\sqcup \nl(G/Z(G))$.
\end{enumerate}
\end{cor}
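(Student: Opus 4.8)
The plan is to apply Theorem \ref{thm:gcp} to the quotient $\overline{G} := G/Z(G)$ and then translate every conclusion back through the natural subgroup correspondences. The hypothesis asserts exactly that $(\overline{G}, Z(\overline{G})) = (G/Z(G), Z(G/Z(G)))$ is a GCP, so Theorem \ref{thm:gcp} applies verbatim with $\overline{G}$ in place of $G$. Before translating, I would record the three identifications that drive the whole argument: by the correspondence theorem $Z(\overline{G}) = Z_2(G)/Z(G)$ and $\overline{G}' = G'/Z(G)$, while the third isomorphism theorem gives $\overline{G}/Z(\overline{G}) \cong G/Z_2(G)$, hence $|\overline{G} : Z(\overline{G})| = |G : Z_2(G)|$, $|Z(\overline{G})| = |Z_2(G)|/|Z(G)|$ and $|\overline{G}'| = |G'|/|Z(G)|$. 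The note preceding the corollary supplies the containment $\overline{G}' = G'/Z(G) \subseteq Z(\overline{G})$, which is what makes the quotient $Z(\overline{G})/\overline{G}'$ meaningful.

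For Part $(1)$ I would feed these identifications into Theorem \ref{thm:gcp} parts $(1)$ and $(3)$ applied to $\overline{G}$. Part $(1)$ there yields $cd(\overline{G}) = \{1, |G:Z_2(G)|^{1/2}\}$, so $\chi(1) = |G : Z_2(G)|^{1/2}$ for every $\chi \in \nl(G/Z(G)) = \nl(\overline{G})$. Part $(3)$ furnishes a bijection $\widehat{\Phi} : \Irr(Z(\overline{G}) \mid \overline{G}') \longrightarrow \nl(\overline{G})$ whose defining formula, after replacing $Z(\overline{G})$ by $Z_2(G)/Z(G)$, $\overline{G}'$ by $G'/Z(G)$, and $|\overline{G}:Z(\overline{G})|^{1/2}$ by $|G:Z_2(G)|^{1/2}$, is precisely the map $\Psi$ in the statement.

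For Part $(2)$, Theorem \ref{thm:gcp} part $(2)$ applied to $\overline{G}$ gives $|\nl(\overline{G})| = |Z(\overline{G})| - |Z(\overline{G})/\overline{G}'|$. Substituting $|Z(\overline{G})| = |Z_2(G)|/|Z(G)|$ and, using the containment above, $|Z(\overline{G})/\overline{G}'| = |Z_2(G)|/|G'|$, and then clearing denominators, produces the claimed value $\frac{|Z_2(G)|(|G'| - |Z(G)|)}{|G'||Z(G)|}$. The disjoint decomposition $\nl(G) = \Irr(G|Z(G)) \sqcup \nl(G/Z(G))$ I would establish separately: since $(G,Z(G))$ is a Camina pair, the note gives $Z(G) \subseteq G'$, so every linear character of $G$ is trivial on $Z(G)$ and thus $\lin(G) = \lin(G/Z(G))$. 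Comparing the two partitions $\Irr(G) = \lin(G) \sqcup \nl(G)$ and $\Irr(G) = \Irr(G/Z(G)) \sqcup \Irr(G|Z(G)) = \lin(G) \sqcup \nl(G/Z(G)) \sqcup \Irr(G|Z(G))$ and cancelling $\lin(G)$ yields the decomposition.

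The argument is essentially bookkeeping; the only point demanding care is the systematic translation between the subgroups of $\overline{G}$ and the terms $Z_2(G)$, $G'$, $Z(G)$ of $G$, together with checking that the hypotheses genuinely guarantee $\overline{G}' \subseteq Z(\overline{G})$ so that Theorem \ref{thm:gcp} applies and the index $|Z(\overline{G}) : \overline{G}'|$ is well defined. I do not anticipate any deeper obstacle.
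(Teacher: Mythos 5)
Your proposal is correct and follows exactly the route the paper intends: the paper offers no written proof, simply deriving the corollary by applying Theorem \ref{thm:gcp} to $G/Z(G)$ after noting that $Z(G)\subseteq G'$ and $(G/Z(G))'=G'/Z(G)\subseteq Z(G/Z(G))$. Your translation of $Z(\overline{G})=Z_2(G)/Z(G)$, $\overline{G}'=G'/Z(G)$, and the index computations, as well as the character-set bookkeeping for $\nl(G)=\Irr(G|Z(G))\sqcup\nl(G/Z(G))$, supplies precisely the details the paper leaves implicit.
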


\begin{prop}\label{lemma:camina&gcp}
Let $G$ be a finite group such that $(G,Z(G))$ is a Camina pair and  $(G/Z(G), Z(G/Z(G)))$  a generalized Camina pair.  Then
\begin{eqnarray*}\label{camina&generalizedcamina}
C^{w_3}(\chi) = \left\{
           \begin{array}{ll}
            \langle \zeta^{w_{2}}_G\downarrow_{Z(G)}, 1_{Z(G)}\rangle & \hbox{if $\chi\in \Irr(G|Z(G))$,} \\
            \langle \zeta^{w_{2}}_G\downarrow_{Z_{2}(G)}, 1_{Z_{2}(G)}\rangle & \hbox{if $\chi\in \nl(G/Z(G))$.}
           \end{array}
         \right.
\end{eqnarray*}
\end{prop}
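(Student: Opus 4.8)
The plan is to compute $C^{w_3}(\chi)=\langle \zeta^{w_2}_G\,\chi,\chi\rangle$ separately for the two families of nonlinear irreducible characters described in Corollary \ref{lastsec_newcor}, part (2), namely $\Irr(G|Z(G))$ and $\nl(G/Z(G))$. In each case the key is to exploit the fact that the relevant character $\chi$ \emph{vanishes off a subgroup}, so that the inner product over $G$ collapses to an inner product over that subgroup, and then to recognize the surviving factor $|\chi(g)|^2/\chi(1)^2$ as the constant $1$ on the support. First I would recall, from the definition, that
\[
C^{w_3}(\chi)=\langle \zeta^{w_2}_G\,\chi,\chi\rangle=\frac{1}{|G|}\sum_{g\in G}\zeta^{w_2}_G(g)\,\chi(g)\,\overline{\chi(g)}=\frac{1}{|G|}\sum_{g\in G}\zeta^{w_2}_G(g)\,|\chi(g)|^2 .
\]

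For $\chi\in\Irr(G|Z(G))$, I would invoke Remark \ref{remark:caminapairandGCP} part (1): such $\chi$ satisfies $\chi(G\setminus Z(G))=0$ and $\chi(1)=|G:Z(G)|^{1/2}$, and moreover on $Z(G)$ it is $|G:Z(G)|^{1/2}\lambda$ for a linear character $\lambda$ of $Z(G)$. Consequently $|\chi(g)|^2=\chi(1)^2|\lambda(g)|^2=\chi(1)^2$ for $g\in Z(G)$ and $|\chi(g)|^2=0$ otherwise. Substituting this into the displayed sum, the factor $\chi(1)^2/|G|$ cancels against the restriction-to-$Z(G)$ normalization, leaving exactly $\langle \zeta^{w_2}_G\!\downarrow_{Z(G)}, 1_{Z(G)}\rangle$, which is the claimed value in the first case. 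The argument for $\chi\in\nl(G/Z(G))$ is formally identical but carried out with $Z_2(G)$ in place of $Z(G)$: by Corollary \ref{lastsec_newcor} part (1) each such $\chi$ (viewed as a character of $G$ inflated from $G/Z(G)$) vanishes outside $Z_2(G)$ and equals $|G:Z_2(G)|^{1/2}$ times a linear character on $Z_2(G)/Z(G)$, so $|\chi(g)|^2=\chi(1)^2$ on $Z_2(G)$ and $0$ off it, yielding $\langle \zeta^{w_2}_G\!\downarrow_{Z_2(G)}, 1_{Z_2(G)}\rangle$.

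The main obstacle, and the step requiring the most care, is the transition of the $\nl(G/Z(G))$ characters between $G/Z(G)$ and $G$: the vanishing-off-$Z_2(G)/Z(G)$ statement in Corollary \ref{lastsec_newcor} lives on the quotient $G/Z(G)$, whereas the inner product defining $C^{w_3}(\chi)$ is taken over $G$. I would address this by noting that inflation along the natural map $G\to G/Z(G)$ identifies $\chi$ with a character of $G$ that is constant on cosets of $Z(G)$, so its support as a character of $G$ is the full preimage $Z_2(G)$ of $Z_2(G)/Z(G)$, and its modulus squared there is still $\chi(1)^2$. One must check that $\zeta^{w_2}_G$ is also suitably compatible with this pullback — but since we only integrate $|\chi|^2$ against $\zeta^{w_2}_G$ and $|\chi|^2$ is already constant on $Z(G)$-cosets inside $Z_2(G)$, the sum over $G$ reorganizes cleanly into $|Z(G)|$ copies of the sum over $Z_2(G)/Z(G)$, matching the normalization of $\langle\,\cdot\,\downarrow_{Z_2(G)},1_{Z_2(G)}\rangle$ after the $\chi(1)^2/|G|$ factor is absorbed. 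Once these support-and-normalization bookkeeping points are settled, both cases follow by direct substitution, completing the proof.
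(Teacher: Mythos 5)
Your proposal is correct and follows essentially the same route as the paper, which simply cites Remark \ref{remark:caminapairandGCP} and Corollary \ref{lastsec_newcor} to note that each $\chi$ vanishes off $Z(G)$ (resp.\ $Z_2(G)$) with $|\chi|^2=\chi(1)^2$ on its support, so the inner product over $G$ collapses as you describe. Your extra worry about compatibility of $\zeta^{w_2}_G$ with inflation is a non-issue: the sum $\frac{1}{|G|}\sum_{g\in G}\zeta^{w_2}_G(g)|\chi(g)|^2$ restricts directly to $g\in Z_2(G)$ without any passage to the quotient, and the normalization $\chi(1)^2/|G|=1/|Z_2(G)|$ finishes it.
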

\begin{proof}
By Remark \ref{remark:caminapairandGCP} one can easily compute that
$\langle \zeta^{w_{2}}_G\chi, \chi \rangle=\langle \zeta^{w_{2}}_G\downarrow_{Z(G)}, 1_{Z(G)}\rangle$, where $\chi\in \Irr(G|Z(G))$.
Again, by Corollary \ref{lastsec_newcor}, we have
$\langle \zeta^{w_{2}}_G\chi, \chi \rangle = \langle \zeta^{w_{2}}_G\downarrow_{Z_{2}(G)}, 1_{Z_{2}(G)}\rangle $, where  $\chi\in \nl(G/Z(G))$.
\end{proof}

\begin{thm}\label{prop:solutioncaminagcp}
Let $G$ be a finite group such that $(G,Z(G))$ is a Camina pair and  $(G/Z(G), Z(G/Z(G)))$  a generalized Camina pair. Then
\begin{eqnarray*}
\begin{aligned}
\zeta^{w_3}_G {} =& |G|^{2}\sum_{\chi\in \lin(G)} \chi+
\bigg(\frac{|G|^3}{|G{}'|}+ \frac{|G|^2|Z_2(G)|(|G'| - |Z_2(G)|)}{|G'||Z(G)|}\bigg) \sum_{\chi\in \Irr(G|Z(G))}\frac{\chi}{\chi(1)} \\
&+ \frac{|G|^{3}}{|Z_2(G)|}\sum_{\chi\in \nl(G/Z(G))}\frac{\chi}{\chi(1)}.
\end{aligned}
\end{eqnarray*}
\end{thm}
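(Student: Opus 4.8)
The plan is to run exactly the argument used for Theorem \ref{Camina-p_class3}, with the restrictions to $\gamma_3(G)$ replaced by restrictions to $Z(G)$ and $Z_2(G)$. Specializing \eqref{neweq0001} to $n=3$ and using the splitting $\nl(G)=\Irr(G|Z(G))\sqcup\nl(G/Z(G))$ from Corollary \ref{lastsec_newcor}(2), I would write
\[
\zeta^{w_3}_G = |G|^{2}\sum_{\chi\in\lin(G)}\chi \;+\; |G|\sum_{\chi\in\Irr(G|Z(G))}\frac{C^{w_3}(\chi)\,\chi}{\chi(1)} \;+\; |G|\sum_{\chi\in\nl(G/Z(G))}\frac{C^{w_3}(\chi)\,\chi}{\chi(1)}.
\]
By Proposition \ref{lemma:camina&gcp}, $C^{w_3}(\chi)=\langle\zeta^{w_2}_G\downarrow_{Z(G)},1_{Z(G)}\rangle$ for $\chi\in\Irr(G|Z(G))$ and $C^{w_3}(\chi)=\langle\zeta^{w_2}_G\downarrow_{Z_2(G)},1_{Z_2(G)}\rangle$ for $\chi\in\nl(G/Z(G))$, and both are constants independent of the particular $\chi$ in the family. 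So the whole theorem reduces to evaluating these two inner products; substituting them into the display above yields the asserted formula.

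For both evaluations I would start from Frobenius's formula \eqref{Frobeq}, which gives $\langle\zeta^{w_2}_G\downarrow_{N},1_{N}\rangle=\sum_{\chi\in\Irr(G)}\tfrac{|G|}{\chi(1)}\langle\chi\downarrow_N,1_N\rangle$ for $N\in\{Z(G),Z_2(G)\}$, and then split $\Irr(G)$ into $\lin(G)$, $\Irr(G|Z(G))$ and $\nl(G/Z(G))$. First I would handle $N=Z(G)$. Since $Z(G)\subseteq G{}'$, every linear character is trivial on $Z(G)$, contributing $|G|\cdot|G:G{}'|=|G|^2/|G{}'|$. Each $\chi\in\Irr(G|Z(G))$ restricts, by Remark \ref{remark:caminapairandGCP}, to a nonzero multiple of a nontrivial linear character of $Z(G)$, so $\langle\chi\downarrow_{Z(G)},1_{Z(G)}\rangle=0$. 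Each $\chi\in\nl(G/Z(G))$ is trivial on $Z(G)$ and hence contributes $|G|$ apiece, giving $|G|\cdot|\nl(G/Z(G))|$ in total. Inserting the count $|\nl(G/Z(G))|=\frac{|Z_2(G)|(|G{}'|-|Z(G)|)}{|G{}'||Z(G)|}$ from Corollary \ref{lastsec_newcor}(2) and multiplying by the outer $|G|$ produces the coefficient of the $\Irr(G|Z(G))$-sum.

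The more delicate computation is $\langle\zeta^{w_2}_G\downarrow_{Z_2(G)},1_{Z_2(G)}\rangle$, and this is where I expect the main obstacle. Because $G{}'\subseteq Z_2(G)$, the quotient $G/Z_2(G)$ is abelian, so the linear characters trivial on $Z_2(G)$ are exactly the $|G:Z_2(G)|$ characters inflated from $G/Z_2(G)$, contributing $|G|\cdot|G:Z_2(G)|=|G|^2/|Z_2(G)|$. The subtle point is that both nonlinear families must contribute zero: a character in $\Irr(G|Z(G))$ vanishes off $Z(G)$ and, on $Z(G)$, is a multiple of a nontrivial $\lambda\in\Irr(Z(G))$, so its average over $Z_2(G)$ against $1_{Z_2(G)}$ is $0$; and a character $\chi\in\nl(G/Z(G))$ vanishes off $Z_2(G)$ by the description of $\Psi$ in Corollary \ref{lastsec_newcor}(1) and, on $Z_2(G)$, is a multiple of a character of $Z_2(G)/Z(G)$ that is nontrivial on $G{}'/Z(G)$, so averaging it over $Z_2(G)$ against $1_{Z_2(G)}$ again gives $0$. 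The naive expectation that the $\nl(G/Z(G))$-characters should contribute (they are nonzero on $Z_2(G)$) is precisely what fails, and seeing this requires the explicit formula for $\Psi$. Hence $\langle\zeta^{w_2}_G\downarrow_{Z_2(G)},1_{Z_2(G)}\rangle=|G|^2/|Z_2(G)|$, so the coefficient of the $\nl(G/Z(G))$-sum is $|G|^3/|Z_2(G)|$, and substituting both coefficients back into the expansion of $\zeta^{w_3}_G$ finishes the proof.
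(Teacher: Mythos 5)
Your proposal is correct and follows essentially the same route as the paper: both reduce via \eqref{neweq0001} and Proposition \ref{lemma:camina&gcp} to evaluating $\langle\zeta^{w_2}_G\downarrow_{Z(G)},1_{Z(G)}\rangle$ and $\langle\zeta^{w_2}_G\downarrow_{Z_2(G)},1_{Z_2(G)}\rangle$ by splitting $\Irr(G)$ into $\lin(G)$, $\Irr(G|Z(G))$ and $\nl(G/Z(G))$, with exactly the same vanishing observations for the two nonlinear families. Note only that your (correct) substitution of $|\nl(G/Z(G))|=\frac{|Z_2(G)|(|G'|-|Z(G)|)}{|G'||Z(G)|}$ yields the coefficient $\frac{|G|^3}{|G'|}+\frac{|G|^2|Z_2(G)|(|G'|-|Z(G)|)}{|G'||Z(G)|}$ rather than the factor $(|G'|-|Z_2(G)|)$ appearing in the displayed statement --- that is a typo in the paper (propagated into its own proof), since the subsequent Theorem \ref{prop:solutioncaminagcp_1} is consistent only with the value you obtained.
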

\begin{proof}
By Corollary \ref{lastsec_newcor} part $(2)$ we have $\nl(G)=\Irr(G|Z(G)) \sqcup ~ \nl(G/Z(G))$. Therefore, using \eqref{neweq0001}, we get
\[
\zeta^{w_3}_G = |G|^{2}\sum_{\chi\in \lin(G)} \chi+
 |G|\sum_{\chi\in \Irr(G|Z(G))}\frac{C^{w_3}(\chi)\chi}{\chi(1)}
+  |G|\sum_{\chi\in \nl(G/Z(G))}\frac{C^{w_3}(\chi)\chi}{\chi(1)}.
\]
By  Proposition \ref{lemma:camina&gcp} the above expression becomes
\begin{eqnarray}\label{eqn:camina&GCPfn1}
\begin{aligned}
\zeta^{w_3}_G {} =& |G|^{2}\sum_{\chi\in \lin(G)} \chi+
\langle \zeta^{w_{2}}_G\downarrow_{Z(G)}, 1_{Z(G)}\rangle |G| \sum_{\chi\in \Irr(G|Z(G))}\frac{\chi}{\chi(1)}  \\
&+\langle \zeta^{w_{2}}_G\downarrow_{Z_2(G)}, 1_{Z_2(G)}\rangle |G| \sum_{\chi\in \nl(G/Z(G))}\frac{\chi}{\chi(1)}.
\end{aligned}
\end{eqnarray}

Let $\phi = \sum_{\chi\in \lin(G)}\chi$. Then  $\langle \phi \downarrow_{Z_2(G)},1_{Z_2(G)}\rangle = |G|/|Z_2(G)|$, since $G'\subseteq Z_2(G)$.  Therefore
\[
\langle\zeta^{w_2}_G\downarrow_{Z_2(G)},1_{Z_2(G)}\rangle = |G|\langle \phi \downarrow_{Z_2(G)},1_{Z_2(G)}\rangle = \frac{|G|^2}{|Z_2(G)|},
\]
observing that $\langle \chi{}\downarrow_{Z_2(G)}, 1_{Z_2(G)}\rangle = 0$ for $\chi\in \Irr(G|Z(G))\sqcup ~ \nl(G/Z(G))$.


Also,  $\langle \phi \downarrow_{Z(G)},1_{Z(G)}\rangle = |G|/|G'|$, since $Z(G)\subseteq G'$ and $\phi$ is the regular character of $G/G'$. Observe that $\langle\chi{}\downarrow_{Z(G)}, 1_{Z(G)}\rangle = 0$ for $\chi\in \Irr(G|Z(G))$ and
$\langle \chi{}\downarrow_{Z(G)}, 1_{Z(G)}\rangle = \chi(1)$ for $\chi\in \nl(G/Z(G))$.Therefore,
\begin{eqnarray*}
\begin{aligned}
\langle\zeta^{w_2}_G\downarrow_{Z(G)},1_{Z(G)}\rangle {} =
& \frac{|G|^2}{|G'|}+|G||\nl(G/Z(G))| = \frac{|G|^2}{|G{}'|}+ \frac{|G||Z_2(G)|(|G'| - |Z_2(G)|)}{|G'||Z(G)|},
\end{aligned}
\end{eqnarray*}
using Corollary \ref{lastsec_newcor} part (2). Now putting the values of $\langle\zeta^{w_2}_G\downarrow_{Z(G)},1_{Z(G)}\rangle$ and $\langle\zeta^{w_2}_G\downarrow_{Z(G)},1_{Z(G)}\rangle$ in \eqref{eqn:camina&GCPfn1} we get the required expression for $\zeta^{w_3}_G $.
\end{proof}

The following lemma is useful in proving the next result.
\begin{lem}\label{lastlemma}
Let $G$ be a finite group such that $(G,Z(G))$ is a Camina pair and  $(G/Z(G), Z(G/Z(G)))$  a generalized Camina pair.  Then
\[
\sum_{\chi\in \Irr(G|Z(G))}\frac{\chi(g)}{\chi(1)} = \begin{cases}
-1 &\text{ if } 1 \neq g \in Z(G)\subseteq G'\\
\;\;\, 0  &\text{ if } g \in G' \setminus Z(G)
\end{cases}
\]
and
\[
\sum_{\chi\in \nl(G/Z(G))}\frac{\chi(g)}{\chi(1)} = \begin{cases}
\frac{|Z_2(G)|(|G'| - |Z(G)|)}{|G'||Z(G)|} &\text{ if } 1 \neq g \in Z(G)\subseteq G'\\
 -\frac{|Z_2(G)|}{|G')|}  &\text{ if } g \in G' \setminus Z(G).
\end{cases}
\]
\end{lem}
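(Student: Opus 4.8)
The plan is to reduce each character sum to a sum of linear characters over an abelian group and then invoke column orthogonality. The two key inputs are the explicit bijections recorded in Remark~\ref{remark:caminapairandGCP}(2) and Corollary~\ref{lastsec_newcor}(1): each member of $\Irr(G|Z(G))$ is a scalar multiple of a linear character of $Z(G)$ supported on $Z(G)$, and each member of $\nl(G/Z(G))$ is a scalar multiple of a linear character of $Z_2(G)/Z(G)$ supported on $Z_2(G)/Z(G)$. Throughout I write $\overline{G}=G/Z(G)$ and $\bar g = gZ(G)$.

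For the first sum I would use $\Phi$. If $\chi=\Phi(\lambda)$ with $\lambda\in\Irr(Z(G))\setminus\{1_{Z(G)}\}$, then Remark~\ref{remark:caminapairandGCP} gives $\chi(g)/\chi(1)=\lambda(g)$ for $g\in Z(G)$ and $\chi(g)=0$ for $g\notin Z(G)$. Thus for $g\in G'\setminus Z(G)$ every summand vanishes, giving $0$; and for $1\neq g\in Z(G)$ the sum becomes $\sum_{\lambda\neq 1_{Z(G)}}\lambda(g)$, which equals $-1$ by orthogonality on the abelian group $Z(G)$ (the full sum over $\Irr(Z(G))$ vanishes at $g\neq 1$, leaving $-1$ after removing the trivial character).

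For the second sum I would pass to $\overline{G}$ and use $\Psi$, writing each $\chi\in\nl(\overline{G})$ as $\Psi(\mu)$ with $\mu\in\Irr(Z(\overline{G})\,|\,G'/Z(G))$; then $\chi(\bar g)/\chi(1)=\mu(\bar g)$ when $\bar g\in Z(\overline{G})$ and $0$ otherwise. For $1\neq g\in Z(G)$ one has $\bar g=\bar 1$, so every ratio is $1$ and the sum counts $|\nl(G/Z(G))|$, which is the first branch by Corollary~\ref{lastsec_newcor}(2). For $g\in G'\setminus Z(G)$ the crucial observation is that $\bar g\in G'/Z(G)=(\overline{G}){}'\subseteq Z(\overline{G})=Z_2(G)/Z(G)$ (the containment noted just before Corollary~\ref{lastsec_newcor}), so $\bar g$ lies in the support of every $\Psi(\mu)$ and the sum reduces to $\sum_{\mu\in\Irr(A|B)}\mu(\bar g)$ with $A=Z(\overline{G})$ and $B=G'/Z(G)\leq A$ both abelian.

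The final computation is an inclusion--exclusion on $\Irr(A)$: the characters trivial on $B$ are exactly $\Irr(A/B)$, each taking the value $1$ at $\bar g\in B$, while $\sum_{\mu\in\Irr(A)}\mu(\bar g)=0$ by orthogonality on $A$; hence
\[
\sum_{\chi\in\nl(\overline{G})}\frac{\chi(\bar g)}{\chi(1)}=\sum_{\mu\in\Irr(A|B)}\mu(\bar g)=-\sum_{\mu\in\Irr(A/B)}\mu(\bar g)=-|A/B|=-\frac{|Z_2(G)|}{|G'|},
\]
using $|A|=|Z_2(G)|/|Z(G)|$ and $|B|=|G'|/|Z(G)|$, which is the remaining branch. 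The main obstacle is the case distinction for the second sum---in particular verifying that $\bar g\in Z_2(G)/Z(G)$ when $g\in G'\setminus Z(G)$, which rests entirely on the structural fact $(\overline{G}){}'\subseteq Z(\overline{G})$; once that containment is in hand, both displays follow from routine orthogonality on abelian groups.
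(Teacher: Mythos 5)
Your proposal is correct and follows exactly the route the paper intends: the paper's proof is just the citation ``Follows from Remark \ref{remark:caminapairandGCP} part (1), (2) and Corollary \ref{lastsec_newcor}'', and your argument supplies precisely the missing details (transferring each sum via the bijections $\Phi$ and $\Psi$ to sums of linear characters over the abelian groups $Z(G)$ and $Z(\overline{G})$, then applying column orthogonality and the count $|\nl(G/Z(G))|$ from Corollary \ref{lastsec_newcor}(2)). No gaps; the only cosmetic note is that the lemma's ``$|G')|$'' is a typo for ``$|G'|$'', which your computation confirms.
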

\begin{proof}
Follows from Remark \ref{remark:caminapairandGCP} part (1), (2) and Corollary \ref{lastsec_newcor}.
\end{proof}
We conclude this subsection by the following result.
\begin{thm}\label{prop:solutioncaminagcp_1}
Suppose $(G,Z(G))$ is a Camina pair and $(G/Z(G), Z(G/Z(G)))$ a generalized Camina pair.  If $g \in G'$ then
\[
\zeta^{w_{2}}_G(g) = \begin{cases}
\frac{|G|^2[|Z(G)|(|G| + |G'|(|Z(G)| - 1)) + |Z_2(G)|(|G'| - |Z(G)|)]}{|G'||Z(G)|} &\text{ if } g=1 \\
\frac{|G|[|Z(G)|(|G| - |G'|) + |Z_2(G)|(|G'| - |Z(G)|)]}{|G'||Z(G)|} &\text{ if } g\in Z(G)\subseteq G{}'\\
\frac{|G|(|G| - |Z_2(G)|)}{|G'|}&\text{ if } g\in G{}'\setminus Z(G)
\end{cases}
\]
and
\[
\zeta^{w_{3}}_G(g) = \begin{cases}
\frac{|G|^2[|G||Z(G)|^2 + (|G'| - |Z(G)|)(|G| + |Z_2(G)|(|Z(G)| - 1))]}{|G'||Z(G)|}&\text{ if } g=1 \\
\frac{|G|^2(|G'| - |Z(G)|)(|G| - |Z_2(G)|)}{|G'||Z(G)|} &\text{ if } g\in Z(G)\subseteq G{}'\\
0 &\text{ if } g\in G{}'\setminus Z(G).
\end{cases}
\]
\end{thm}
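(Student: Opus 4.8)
The plan is to evaluate, at each of the three argument-types $g=1$, $1\neq g\in Z(G)$, and $g\in G'\setminus Z(G)$, the two closed-form character expansions that are already on the table: the Frobenius formula \eqref{Frobeq} for $\zeta^{w_2}_G$ and the explicit formula of Theorem \ref{prop:solutioncaminagcp} for $\zeta^{w_3}_G$. The engine for every case is the partition $\Irr(G)=\lin(G)\sqcup\Irr(G|Z(G))\sqcup\nl(G/Z(G))$ from Corollary \ref{lastsec_newcor} part (2), together with the two character-sum identities recorded in Lemma \ref{lastlemma}. I would first dispose of the linear contribution once and for all: since $g\in G'$, every $\chi\in\lin(G)$ is a character of the abelian quotient $G/G'$ and hence satisfies $\chi(g)=1$, so $\sum_{\chi\in\lin(G)}\chi(g)=|\lin(G)|=|G:G'|$ in all three cases.

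For $\zeta^{w_2}_G$ I would start from $\zeta^{w_2}_G(g)=|G|\sum_{\chi\in\lin(G)}\chi(g)+|G|\sum_{\chi\in\Irr(G|Z(G))}\tfrac{\chi(g)}{\chi(1)}+|G|\sum_{\chi\in\nl(G/Z(G))}\tfrac{\chi(g)}{\chi(1)}$, obtained by regrouping \eqref{Frobeq} along the above partition. When $1\neq g\in Z(G)$ or $g\in G'\setminus Z(G)$, the last two sums are read off directly from Lemma \ref{lastlemma}, and a short simplification (clearing the common denominator $|G'||Z(G)|$ and factoring out $|G|$) yields the two lower cases. For $g=1$ it is cleanest to note $\chi(1)/\chi(1)=1$, so the two nonlinear sums are just the cardinalities $|\Irr(G|Z(G))|=|Z(G)|-1$ from Remark \ref{remark:caminapairandGCP} part (3) and $|\nl(G/Z(G))|=\tfrac{|Z_2(G)|(|G'|-|Z(G)|)}{|G'||Z(G)|}$ from Corollary \ref{lastsec_newcor} part (2); equivalently one may use $\zeta^{w_2}_G(1)=|G|\,|\Irr(G)|$ and count $\Irr(G)$ through the same partition. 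Collecting all terms over the common denominator $|G'||Z(G)|$ produces the stated value at $g=1$.

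The computation of $\zeta^{w_3}_G(g)$ is entirely parallel, but now I would feed the same three character-sum values into the explicit expansion of Theorem \ref{prop:solutioncaminagcp}, which splits $\zeta^{w_3}_G$ into a linear block (coefficient $|G|^2$), an $\Irr(G|Z(G))$-block, and an $\nl(G/Z(G))$-block (coefficient $|G|^3/|Z_2(G)|$). For $g\in G'\setminus Z(G)$ the $\Irr(G|Z(G))$-sum vanishes by Lemma \ref{lastlemma}, the linear block equals $|G|^3/|G'|$, and the $\nl(G/Z(G))$-block equals $-|G|^3/|G'|$, so the total telescopes to $0$. For $1\neq g\in Z(G)$ the linear block cancels the dominant $|G|^3/|G'|$ piece of the $\Irr(G|Z(G))$-coefficient, and the two surviving terms collapse, after factoring, to $\tfrac{|G|^2(|G'|-|Z(G)|)(|G|-|Z_2(G)|)}{|G'||Z(G)|}$. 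For $g=1$ the three cardinalities are substituted and the whole is regrouped over $|G'||Z(G)|$ to match the claimed expression.

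I expect no conceptual obstacle here: all the representation-theoretic content is already packaged in Lemma \ref{lastlemma}, Remark \ref{remark:caminapairandGCP}, Corollary \ref{lastsec_newcor}, and Theorem \ref{prop:solutioncaminagcp}. The only real work is bookkeeping --- keeping the three argument-types strictly separate, correctly using $\sum_{\chi\in\lin(G)}\chi(g)=|G:G'|$ (a trap would be to set this to $0$ by confusing the condition $g\in G'$ with $g\notin Z(G)$, since the Camina-pair vanishing $\chi(g)=0$ off $Z(G)$ applies only to \emph{nonlinear} $\chi$), and carrying out the denominator-clearing algebra so that every case lands in the common form with denominator $|G'||Z(G)|$. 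The $g=1$ cases are the most error-prone simplifications, since they collect the largest number of terms.
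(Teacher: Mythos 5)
Your proposal matches the paper's own proof essentially step for step: the paper evaluates $\zeta^{w_2}_G$ from \eqref{Frobeq} (using $\zeta^{w_2}_G(1)=|G|\,|\Irr(G)|$ at the identity and the regrouped expansion \eqref{000000} elsewhere) and $\zeta^{w_3}_G$ from Theorem \ref{prop:solutioncaminagcp}, in both cases feeding in the partition $\Irr(G)=\lin(G)\sqcup\Irr(G|Z(G))\sqcup\nl(G/Z(G))$, the cardinalities from Remark \ref{remark:caminapairandGCP} and Corollary \ref{lastsec_newcor}, and the character sums of Lemma \ref{lastlemma}, exactly as you describe. Your observation that $\sum_{\chi\in\lin(G)}\chi(g)=|G:G'|$ for $g\in G'$ is precisely the source of the $|G|^2/|G'|$ (resp.\ $|G|^3/|G'|$) leading terms in the paper's computation, so the approaches coincide.
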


\begin{proof}
If $g = 1$, then by \eqref{Frobeq} we have
\[
\zeta^{w_2}_G(1)=|G||\Irr(G)| = \frac{|G|^2[|Z(G)|(|G| + |G'|(|Z(G)| - 1)) + |Z_2(G)|(|G'| - |Z(G)|)]}{|G'||Z(G)|},
\]
noting that $\Irr(G) = \lin(G) \sqcup \Irr(G|Z(G)) \sqcup ~ \nl(G/Z(G))$. If $1\neq g\in  G'$ then by \eqref{Frobeq} we have
\begin{equation}\label{000000}
\zeta^{w_2}_G(g) = \frac{|G|^2}{|G'|} + |G|\sum_{\chi\in \Irr(G|Z(G))}\frac{\chi(g)}{\chi(1)} + |G|\sum_{\chi\in \nl(G/Z(G))}\frac{\chi(g)}{\chi(1)}
\end{equation}
Now, if $1\neq g\in Z(G)\subseteq G'$, then by \eqref{000000} and Lemma \ref{lastlemma} we have
\begin{align*}
\zeta^{w_2}_G(g) = & \frac{|G|^2}{|G'|} + |G|(-1)  + \frac{|G||Z_2(G)|(|G'| - |Z(G)|)}{|G'||Z(G)|}\\
= & \frac{|G|[|Z(G)|(|G| - |G'|) + |Z_2(G)|(|G'| - |Z(G)|)]}{|G'||Z(G)|}.
\end{align*}
If $g\in G'\setminus Z(G)$, then by \eqref{000000} and Lemma \ref{lastlemma} we have
\begin{align*}
\zeta^{w_2}_G(g) = & \frac{|G|^2}{|G'|} +   \frac{-|G||Z_2(G)|}{|G'|} = \frac{|G|(|G| - |Z_2(G)|)}{|G'|}.
\end{align*}

For the second part, if $g = 1$, then by Theorem \ref{prop:solutioncaminagcp}, we have
\begin{eqnarray*}
\begin{aligned}
\zeta^{w_3}_G(1)= & \frac{|G|^{3}}{|G'|} + \bigg(\frac{|G|^3}{|G{}'|}+ \frac{|G|^2|Z_2(G)|(|G'| - |Z_2(G)|)}{|G'||Z(G)|}\bigg) |\Irr(G|Z(G))| + \frac{|G|^{3}|\nl(G/Z(G))|}{|Z_2(G)|}\\
=& \frac{|G|^2[|G||Z(G)|^2 + (|G'| - |Z(G)|)(|G| + |Z_2(G)|(|Z(G)| - 1))]}{|G'||Z(G)|},
\end{aligned}
\end{eqnarray*}
using Remark \ref{remark:caminapairandGCP} (part 3) and Corollary \ref {lastsec_newcor} (part 2).

If $1\neq g\in G'$, then by Theorem \ref {prop:solutioncaminagcp} we have
\begin{eqnarray}\label{lasteq}
\begin{aligned}
\zeta^{w_3}_G (g) =& \frac{|G|^{3}}{|G'|}+
\bigg(\frac{|G|^3}{|G{}'|}+ \frac{|G|^2|Z_2(G)|(|G'| - |Z_2(G)|)}{|G'||Z(G)|}\bigg) \sum_{\chi\in \Irr(G|Z(G))}\frac{\chi(g)}{\chi(1)} \\
&+ \frac{|G|^{3}}{|Z_2(G)|}\sum_{\chi\in \nl(G/Z(G))}\frac{\chi(g)}{\chi(1)}.
\end{aligned}
\end{eqnarray}
If $1\neq g\in Z(G)\subseteq G'$, then by \eqref{lasteq} and Lemma \ref{lastlemma} we have
\begin{eqnarray*}
\begin{aligned}
\zeta^{w_3}_G (g) =& \frac{|G|^{3}}{|G'|}+
\bigg(\frac{|G|^3}{|G{}'|}+ \frac{|G|^2|Z_2(G)|(|G'| - |Z_2(G)|)}{|G'||Z(G)|}\bigg) (-1)
+  \frac{|G|^{3}(|G'| - |Z(G)|)}{|G'||Z(G)|}\\
=& \frac{|G|^2(|G'| - |Z(G)|)(|G| - |Z_2(G)|)}{|G'||Z(G)|}.
\end{aligned}
\end{eqnarray*}

\noindent  If $g\in G'\setminus Z(G)$, then by \eqref{lasteq} and Lemma \ref{lastlemma}, we have
$\zeta^{w_3}_G (g) = 0$. This completes the proof.
\end{proof}


\subsection{Groups with unique nonlinear irreducible character}

The class of groups considered in this subsection is a subclass of the groups $G$ with $|cd(G)| = 2$, where $cd(G) = \{\chi(1) : \chi \in \Irr(G)\}$. Groups having $|cd(G)| = 2$ are considered in \cite{mP04} for the word $w_2(x_1, x_2)=[x_1,x_2]$. Note that explicit formula for $\zeta^{w_2}_G(g)$ can be obtained from  \cite[Theorem 2.2]{mP04} for these groups. In this subsection, we derive  explicit formula for $\zeta^{w_n}_G(g)$ where $n \geq 3$ for groups having unique nonlinear irreducible character. We start with the following theorem which is a consequence of the main theorem in \cite{BCH}.
\begin{thm}\label{thm:distinctdegree}
Let $G$ be a non-abelian finite group having unique nonlinear irreducible character. Then one of the following holds:
\begin{enumerate}
\item $G$ is an extra-special $2$-group.
\item  $G$ is a Frobenius group of order $p^m(p^m-1)$ for some prime power $p^m$ with
an abelian Frobenius kernel of order $p^m$, a cyclic Frobenius complement and $Z(G)=\{1\}$. Here Frobenius kernel is the commutator subgroup of $G$
and $G$ acts transitively, by conjugation, on $G{}'\setminus \{1\}$.
\end{enumerate}
\end{thm}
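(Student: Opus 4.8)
The plan is to show first, using only the hypothesis, that $G$ is a Camina group, and then to read off the two alternatives from the classification of Camina groups together with the character counts already recorded in this section. Write $\chi$ for the unique nonlinear irreducible character and $k=|\lin(G)|=|G:G'|$; since $G$ is non-abelian we have $G'\neq 1$, and $G'\neq G$ (otherwise $\Irr(G)=\{1_G,\chi\}$, forcing $G$ to have two conjugacy classes and hence $G\cong C_2$). For every $\lambda\in\lin(G)$ the product $\lambda\chi$ is irreducible of degree $\chi(1)>1$, so by uniqueness $\lambda\chi=\chi$; thus $\lambda(g)\chi(g)=\chi(g)$ for all $g$, and therefore $\chi(g)=0$ for every $g\notin\bigcap_{\lambda\in\lin(G)}\ker\lambda=G'$.

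I would next convert this vanishing into the Camina condition. For $g\in G\setminus G'$, column orthogonality gives $|C_G(g)|=\sum_{\psi\in\Irr(G)}|\psi(g)|^2=\sum_{\lambda\in\lin(G)}1+|\chi(g)|^2=k$, so $|Cl_G(g)|=|G|/k=|G'|$. As $G/G'$ is abelian, $Cl_G(g)\subseteq gG'$, and equality of cardinalities yields $Cl_G(g)=gG'$. Hence $(G,G')$ is a Camina pair, i.e. $G$ is a Camina group, and the classification of Camina groups leaves only the nilpotent ($p$-group) case and the Frobenius case.

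If $G$ is a Camina $p$-group, then by \cite{Dark} its nilpotency class is $2$ or $3$. In class $2$, the Camina inclusions $Z(G)\le G'$ and $G'\le Z(G)$ give $G'=Z(G)$, and $(G,Z(G))$ is a GCP, so Theorem \ref{thm:gcp}(2) gives $|\nl(G)|=|Z(G)|-1$; uniqueness forces $|Z(G)|=2$, whence $p=2$, $G'=Z(G)=\Phi(G)$ has order $2$ and $G/Z(G)$ is elementary abelian, i.e. $G$ is an extra-special $2$-group. In class $3$, Remark \ref{remark:caminapgroup3} gives $|\nl(G)|=(|Z(G)|-1)+(|G':Z(G)|-1)$ with $|G':Z(G)|=p^{m}$ and $m\ge 2$, so $|\nl(G)|\ge p^{m}-1\ge 3$ and this case does not arise. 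This yields alternative (1).

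In the Frobenius case $G=K\rtimes H$ with kernel $K=G'$, I would count nonlinear characters by Clifford theory: they are the inflations of $\nl(H)\cong\nl(G/K)$ together with one induced character for each $H$-orbit on $\Irr(K)\setminus\{1_K\}$, so $|\nl(G)|=r+|\nl(H)|$ with $r\ge 1$. Uniqueness forces $|\nl(H)|=0$ and $r=1$; thus $H$ is abelian, hence cyclic (an abelian Frobenius complement is cyclic), and $H$ is transitive on $K\setminus\{1\}$. Transitivity makes all non-identity elements of $K$ conjugate, so $K$ is elementary abelian of order $p^{m}$, fixed-point-freeness forces $|H|=p^{m}-1$, and $Z(G)=1$; this is alternative (2). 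The main obstacle is the input classification of Camina groups, specifically the exclusion of the exotic Frobenius Camina groups with non-abelian kernel or non-abelian complement, each of which contributes an extra nonlinear character and so is ruled out by $|\nl(G)|=1$; this exclusion is exactly the specialization of the main theorem of \cite{BCH} to the case of a single nonlinear character degree.
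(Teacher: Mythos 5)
The paper does not actually prove this statement: it is presented as an immediate consequence of the main theorem of \cite{BCH} (Berkovich--Chillag--Herzog on groups whose nonlinear irreducible character degrees are distinct), so the ``proof'' in the paper is a citation. Your argument is a genuine, essentially correct proof along a different route: you first derive the Camina property $(G,G')$ from the uniqueness of the nonlinear character (via $\lambda\chi=\chi$ and column orthogonality --- this part is clean and correct), then feed the classification of Camina groups (Chillag--Macdonald plus Dark--Scoppola) into a count of nonlinear characters in each branch. What this buys is a self-contained derivation from the structure theory of Camina groups, and your $p$-group analysis (class $2$ forces $|Z(G)|=2$ and hence extra-special of exponent dividing $4$; class $3$ gives at least $p^2-1\ge 3$ nonlinear characters) is correct. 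The cost is that you invoke a classification at least as deep as the cited result, and part of it (the dichotomy ``$p$-group or Frobenius'' and the cyclicity statement for complements) is not in the paper's bibliography.

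One step needs more care. In the Frobenius branch you pass from ``$r=1$, i.e.\ $H$ has a single orbit on $\Irr(K)\setminus\{1_K\}$'' directly to ``$H$ is transitive on $K\setminus\{1\}$.'' These are not the same statement. You should first observe that $K$ is abelian: a Frobenius kernel is nilpotent, so if $K$ were non-abelian it would have both a nontrivial linear character and a nonlinear irreducible character, which lie in distinct $H$-orbits (degrees are orbit invariants), forcing $r\ge 2$. Once $K$ is abelian, Brauer's permutation lemma identifies the number of $H$-orbits on $\Irr(K)$ with the number of $H$-orbits on $K$, and only then does $r=1$ give transitivity on $K\setminus\{1\}$, from which elementary abelianity, $|H|=p^m-1$ (semiregularity of the action) and $Z(G)=1$ follow as you say. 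With that two-line repair the proof is complete.
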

\noindent Note that the  groups in the above theorem are Camina groups. Further, group of type $(1)$ has nilpotency class $2$ whereas group of type $(2)$ is not nilpotent. Therefore,  we derive  explicit formula of $\zeta^{w_n}_G$, where $n \geq 3$, only for the group $G$ of type ~$(2)$.
We need the following lemma.
\begin{lem}\label{lemma:uniquenon}
 Let $G$ be a group with unique nonlinear irreducible character $\chi$ with $Z(G)=\{1\}$.
 Then we have the following.
\begin{enumerate}
\item $\chi$ vanishes outside $G{}'$  and $\chi(1)=p^{m}-1$.
\item $\chi^2=\sum_{\phi\in \lin(G)}\phi+ (p^m-2)\chi$.
\item $\chi(g)=-1$ for all $g\in G{}'\setminus \{1\}$.
\end{enumerate}
\end{lem}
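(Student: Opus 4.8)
The plan is to reduce everything to the structural dichotomy of Theorem~\ref{thm:distinctdegree}. Since $G$ is non-abelian, has a unique nonlinear irreducible character, and satisfies $Z(G)=\{1\}$, it cannot be the extra-special $2$-group of case~(1) (that group has center of order $2$), so $G$ must be the Frobenius group of case~(2): $|G|=p^m(p^m-1)$, the Frobenius kernel equals $G{}'$ and is abelian of order $p^m$, the complement is cyclic, and $G$ acts transitively by conjugation on $G{}'\setminus\{1\}$. I will also invoke the remark recorded just after Theorem~\ref{thm:distinctdegree} that $G$ is a Camina group.

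For part~(1), the Camina property immediately gives $\chi(G\setminus G{}')=0$ for the unique nonlinear character $\chi$, which is the first assertion. For the degree, the linear characters of $G$ are exactly the characters of $G/G{}'$, so $|\lin(G)|=|G:G{}'|=p^m-1$. Substituting $\Irr(G)=\lin(G)\sqcup\{\chi\}$ into $\sum_{\psi\in\Irr(G)}\psi(1)^2=|G|$ yields $(p^m-1)+\chi(1)^2=p^m(p^m-1)$, whence $\chi(1)^2=(p^m-1)^2$ and $\chi(1)=p^m-1$. For part~(3), transitivity of the conjugation action on $G{}'\setminus\{1\}$ forces $\chi$ to be constant, say equal to $c$, on $G{}'\setminus\{1\}$. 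Applying the column orthogonality relation $\sum_{\psi\in\Irr(G)}\psi(1)\psi(g)=0$ at a fixed $1\neq g\in G{}'$, together with the fact that every linear character takes the value $1$ on $G{}'$, gives $(p^m-1)+(p^m-1)c=0$, so $c=-1$.

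Finally, part~(2) follows by a direct inner-product computation, now that the complete character values are known from parts~(1) and~(3): $\chi(1)=p^m-1$, $\chi(g)=-1$ on the $p^m-1$ nonidentity elements of $G{}'$, and $\chi=0$ off $G{}'$; in particular $\chi$ is real-valued. Since $\lin(G)\cup\{\chi\}$ exhausts $\Irr(G)$, I would write $\chi^2=\sum_{\phi\in\lin(G)}a_\phi\,\phi+b\,\chi$ and evaluate $a_\phi=\langle\chi^2,\phi\rangle$ and $b=\langle\chi^2,\chi\rangle$ by summing $\chi(g)^2\overline{\phi(g)}$ and $\chi(g)^3$ over the three classes of elements. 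This produces $a_\phi=1$ for each linear $\phi$ and $b=p^m-2$, which is exactly the claimed identity; the degree check $(p^m-1)^2=(p^m-1)+(p^m-2)(p^m-1)$ confirms the bookkeeping.

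I expect no serious obstacle here: the entire argument is an assembly of the structural input from Theorem~\ref{thm:distinctdegree} with elementary character theory. The only point that needs care in ordering is that part~(2) must come last, since its computation relies on having already pinned down all the values of $\chi$ in parts~(1) and~(3); the remaining steps are routine.
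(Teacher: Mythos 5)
Your proof is correct, and parts (1) and (3) run essentially as in the paper: the reduction to the Frobenius group of Theorem \ref{thm:distinctdegree}, the Camina/degree-sum argument for (1), and an orthogonality computation for (3) (you use column orthogonality at a fixed $g\in G'\setminus\{1\}$, the paper uses $\langle\chi,\lambda\rangle=0$ for linear $\lambda$ together with constancy on $G'\setminus\{1\}$ --- the same calculation in two guises). The one genuine divergence is part (2). The paper never uses the character values: it observes that for linear $\phi$ one has $\langle\chi^2,\phi\rangle=\langle\chi,\overline{\chi}\phi\rangle=\langle\chi,\chi\rangle=1$, because uniqueness of the nonlinear irreducible forces $\overline{\chi}=\chi$ and $\chi\phi=\chi$, and then reads off the coefficient $p^m-2$ of $\chi$ from a degree count; consequently (2) is proved \emph{before} (3) and independently of it. You instead pin down all values of $\chi$ first and evaluate $\langle\chi^2,\phi\rangle$ and $\langle\chi^2,\chi\rangle$ by brute force, which is why you must reorder and prove (3) before (2). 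Both are valid; the paper's route is shorter and isolates the structural reason ($\chi$ is fixed by twisting with linear characters and by conjugation), while yours is more mechanical but self-contained once the character table is known. Your arithmetic checks out: $a_\phi=\frac{(p^m-1)^2+(p^m-1)}{p^m(p^m-1)}=1$ and $b=\frac{(p^m-1)^3-(p^m-1)}{p^m(p^m-1)}=p^m-2$.
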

\begin{proof} By Theorem \ref{thm:distinctdegree}, $G$ is a Frobenius group of order $p^m(p^m-1)$ for some prime power $p^m$.\\
\noindent $(1)$ Since $G$ is a Camina group,   $\chi$ vanishes outside $G{}'$. The second part follows from the relation
$|G|=\sum_{\phi\in \Irr(G)}\phi(1)^2$.

%
\noindent $(2)$ Since $\chi^2$ is a character of $G$, $\chi^2=\sum_{\phi\in \Irr(G)}c_{\phi}\phi$,
where $c_{\phi}=\langle \chi^2,\phi\rangle \in \mathbb{N}\cup \{0\}$. Let $\phi\in \lin(G)$. Then
$c_{\phi}=\langle \chi^2, \phi\rangle=\langle \chi, \overline{\chi}\phi\rangle=\langle \chi, \chi\phi\rangle=\langle \chi, \chi\rangle=1$.
Therefore,
$$
\chi^2=\sum_{\phi\in \lin(G)}\phi+ c_{\chi}\chi.
$$
 Now, by degree computation
of both the sides of the above expression, we get $c_{\chi}=p^m-2$. Hence the result follows.

\noindent $(3)$ Follows from the fact that $\langle \chi, \lambda \rangle=0$ for $\lambda\in \lin(G)$.
\end{proof}

\begin{thm}\label{prop:gencommutator}
Let $G$ be a group with unique nonlinear irreducible character $\phi$ and $Z(G)=\{1\}$.
If $n \geq 3$, then
\[
C^{w_n}(\phi) = \frac{|G|^{n-1}}{|G{}'|}+ \frac{|G|}{(p^m-1)}C^{w_{n-1}}(\phi)(p^m-2)
\]
and hence $\zeta^{w_{n}}_G = |G|^{n-1}\sum_{\chi\in \lin(G)}~\chi+\frac{|G|}{(p^m-1)}C^{w_{n}}(\phi)\phi$.
\end{thm}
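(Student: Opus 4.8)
The plan is to compute the coefficient $C^{w_n}(\phi)$ recursively; once this is in hand, the closed form for $\zeta^{w_n}_G$ is immediate. Indeed, since $\phi$ is the unique nonlinear irreducible character, $\Irr(G) = \lin(G)\sqcup\{\phi\}$, so specializing \eqref{neweq0001} and using $\phi(1) = p^m - 1$ from Lemma \ref{lemma:uniquenon}(1) gives exactly
\[
\zeta^{w_n}_G = |G|^{n-1}\sum_{\chi\in\lin(G)}\chi + \frac{|G|}{p^m-1}C^{w_n}(\phi)\,\phi.
\]
Thus the entire substance of the theorem is the stated recurrence for $C^{w_n}(\phi)$, and the ``hence'' clause will follow with no further work.

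To obtain the recurrence, I would first record that the same specialization applied one level down yields, for every $n\geq 3$,
\[
\zeta^{w_{n-1}}_G = |G|^{n-2}\sum_{\chi\in\lin(G)}\chi + \frac{|G|}{p^m-1}C^{w_{n-1}}(\phi)\,\phi,
\]
where for $n=3$ this is just Frobenius's formula \eqref{Frobeq} read off with $C^{w_2}(\phi)=1$. Substituting this expression into the definition $C^{w_n}(\phi)=\langle\zeta^{w_{n-1}}_G\phi,\phi\rangle$ and expanding by linearity of the inner product splits the computation into a linear part and a $\phi$-part:
\[
C^{w_n}(\phi) = |G|^{n-2}\sum_{\chi\in\lin(G)}\langle\chi\phi,\phi\rangle + \frac{|G|}{p^m-1}C^{w_{n-1}}(\phi)\,\langle\phi^2,\phi\rangle.
\]

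It then remains to evaluate the two inner products. For the first, the product of $\phi$ with any linear character is again irreducible and nonlinear, hence equals the unique nonlinear character $\phi$; therefore $\langle\chi\phi,\phi\rangle = \langle\phi,\phi\rangle = 1$ for each $\chi\in\lin(G)$, and since $|\lin(G)| = |G:G'| = |G|/|G'|$ the first summand becomes $|G|^{n-2}\cdot|G|/|G'| = |G|^{n-1}/|G'|$. For the second, Lemma \ref{lemma:uniquenon}(2) gives $\phi^2 = \sum_{\psi\in\lin(G)}\psi + (p^m-2)\phi$, whence $\langle\phi^2,\phi\rangle = p^m-2$. Combining the two evaluations yields precisely
\[
C^{w_n}(\phi) = \frac{|G|^{n-1}}{|G'|} + \frac{|G|}{p^m-1}C^{w_{n-1}}(\phi)(p^m-2),
\]
and the displayed formula for $\zeta^{w_n}_G$ then follows from the first paragraph.

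I expect no serious obstacle here. The only points requiring care are the correct bookkeeping of the edge case $n-1=2$, where $C^{w_2}(\phi)=1$ must be substituted in place of \eqref{neweq0001}, and the verification via the Frobenius structure of $G$ in Theorem \ref{thm:distinctdegree} that $|\lin(G)| = |G:G'| = p^m-1$. The genuinely substantive inputs are the two character-theoretic identities $\langle\chi\phi,\phi\rangle=1$ and $\langle\phi^2,\phi\rangle=p^m-2$, both of which rest on the uniqueness of the nonlinear irreducible character together with Lemma \ref{lemma:uniquenon}.
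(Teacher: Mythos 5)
Your proposal is correct and follows essentially the same route as the paper: both substitute the expression of $\zeta^{w_{n-1}}_G$ as a combination of the linear characters and $\phi$ into $C^{w_n}(\phi)=\langle \zeta^{w_{n-1}}_G\phi,\phi\rangle$ and evaluate the same two inner products $\langle\chi\phi,\phi\rangle=1$ and $\langle\phi^2,\phi\rangle=p^m-2$ via Lemma \ref{lemma:uniquenon}. The only cosmetic difference is that the paper phrases this as an induction on $n$, whereas you correctly observe that the needed form of $\zeta^{w_{n-1}}_G$ already follows directly from \eqref{neweq0001} (or from \eqref{Frobeq} when $n-1=2$), so no induction is strictly necessary.
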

\begin{proof} By Theorem \ref{thm:distinctdegree}, $G$ is a Frobenius group of order $p^m(p^m-1)$ for some prime power $p^m$.
We shall prove the proposition by induction on $n$. By Lemma \ref{lemma:uniquenon} (2), we have
\begin{eqnarray*}
C^{w_3}(\phi) =\bigg\langle \bigg(\sum_{\chi\in \Irr(G)}\frac{|G|}{\chi(1)}\chi\bigg) \phi,\phi \bigg\rangle
= \sum_{\chi\in \Irr(G)}\frac{|G|}{\chi(1)}\big\langle \chi\phi,\phi \big\rangle
=\frac{|G|^2}{|G{}'|}+ \frac{|G|}{(p^m-1)}C^{w_2}(\phi)(p^m-2).
\end{eqnarray*}
Hence, by \eqref{neweq0001}, we have
\begin{equation*}\label{eqn:w3}
\zeta^{w_3}_G = |G|^2\sum_{\chi\in \lin(G)}~\chi + \frac{|G|}{(p^m-1)}  C^{w_3}(\phi)  \phi,
\end{equation*}
Thus  induction begins at $n = 3$.
Suppose that the proposition is true for $n-1$. Then
\[
C^{w_{n-1}}(\phi)= \frac{|G|^{n-2}}{|G{}'|}+ \frac{|G|}{(p^m-1)}C^{w_{n-2}}(\phi)(p^m-2)
\]
and hence
\[
\zeta^{w_{n-1}}_G = |G|^{n-2}\sum_{\chi\in \lin(G)}~\chi+\frac{|G|}{(p^m-1)}C^{w_{n-1}}(\phi)\phi.
\]
Therefore
\begin{eqnarray*}
C^{w_{n}}(\phi) &=& \langle \zeta^{w_{n-1}}_G \phi,\phi \rangle\\
&=&\bigg\langle \bigg(\sum_{\psi\in \lin(G)}\frac{|G|^{n-2}}{\psi(1)}\psi+\frac{|G|}{(p^m-1)}C^{w_{n-1}}(\phi)\phi \bigg)~\phi, \phi\bigg\rangle \\
&=& \sum_{\psi\in \lin(G)}\frac{|G|^{n-2}}{\psi(1)}\langle \phi,\phi\rangle +\frac{|G|}{(p^m-1)}C^{w_{n-1}}(\phi)\langle
 \phi^2,\phi \rangle\\
&=&\frac{|G|^{n-1}}{|G{}'|}+ \frac{|G|}{(p^m-1)}C^{w_{n-1}}(\phi)(p^m-2)~~~~~~~~~~~~~~~(\textnormal{using Lemma \ref{lemma:uniquenon} (2)}).
\end{eqnarray*}
Now,  by \eqref{neweq0001}, we have
\[
\zeta^{w_{n}}_G = |G|^{n-1}\sum_{\chi\in \lin(G)}~\chi+\frac{|G|}{(p^m-1)}C^{w_{n}}(\phi)\phi.
\]
This proves  the theorem.
\end{proof}


\begin{thm}\label{Appl_Prop_1}
Let $G$ be a group with unique non-linear irreducible character $\phi$ with $Z(G)=\{1\}$. If $g \in G'$ and $n \geq 3$ then
\[
\zeta^{w_{n}}_G(g) = \begin{cases}
\frac{|G|^n}{|G'|} + |G|C^{w_{n}}(\phi) &\text{ if } g = 1 \\
\frac{|G|^n}{|G'|} -\frac{|G|}{(p^m-1)}C^{w_{n}}(\phi) &\text{ if } g \neq 1.
\end{cases} 
\]

In particular, for $n=3$
\[
\zeta^{w_{3}}_G(g) = \begin{cases}
\frac{2|G|^3}{|G'|} + \frac{|G|^2}{p^m-1}(p^m-2) &\text{ if } g = 1 \\
\frac{|G|^3}{|G'|} -\frac{|G|^2}{(p^m-2)}(\frac{|G|}{|G{}'|}+\frac{p^m-2}{p^m-1}) &\text{ if } g \neq 1.
\end{cases} 
\]
\end{thm}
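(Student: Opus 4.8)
The plan is to read the two values of $\zeta^{w_n}_G$ straight off the closed form already established in Theorem~\ref{prop:gencommutator}, namely
\[
\zeta^{w_{n}}_G = |G|^{n-1}\sum_{\chi\in \lin(G)}\chi+\frac{|G|}{p^m-1}\,C^{w_{n}}(\phi)\,\phi ,
\]
and then simply evaluate each side at the prescribed $g\in G'$. Since every term on the right is an explicit class function, the whole proof reduces to computing the linear part and the single nonlinear character $\phi$ at $g$, treating $g=1$ and $g\neq 1$ separately.

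First I would dispose of the linear part. Each $\chi\in\lin(G)$ is trivial on $G'$, so for $g\in G'$ we have $\chi(g)=1$; hence $\sum_{\chi\in\lin(G)}\chi(g)=|\lin(G)|=|G:G'|=|G|/|G'|$, and the first summand contributes $|G|^{n-1}\cdot|G|/|G'|=|G|^n/|G'|$ in both cases. Next I would invoke Lemma~\ref{lemma:uniquenon}: part~(1) gives $\phi(1)=p^m-1$, while part~(3) gives $\phi(g)=-1$ for every $g\in G'\setminus\{1\}$. Substituting $\phi(1)=p^m-1$ in the case $g=1$ cancels the factor $p^m-1$ in the denominator and produces $\zeta^{w_n}_G(1)=|G|^n/|G'|+|G|\,C^{w_n}(\phi)$, whereas substituting $\phi(g)=-1$ in the case $g\neq 1$ produces $\zeta^{w_n}_G(g)=|G|^n/|G'|-\frac{|G|}{p^m-1}C^{w_n}(\phi)$. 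This settles the general formula.

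For the specialization to $n=3$ I would insert the explicit value of $C^{w_3}(\phi)$. By Theorem~\ref{prop:gencommutator} together with the base value $C^{w_2}(\phi)=1$ recorded after \eqref{gencommutsolfunction}, one has
\[
C^{w_3}(\phi)=\frac{|G|^2}{|G'|}+\frac{|G|(p^m-2)}{p^m-1},
\]
and plugging this into the two cases above and simplifying yields the stated expressions for $\zeta^{w_3}_G(1)$ and $\zeta^{w_3}_G(g)$, $g\neq 1$. Since Theorem~\ref{prop:gencommutator} already carries all of the representation-theoretic content, there is no real obstacle: the argument is a direct substitution. The only point requiring attention is the hypothesis $g\in G'$, which is exactly what forces $\chi(g)=1$ for the linear characters and what makes Lemma~\ref{lemma:uniquenon}(3) applicable to $\phi$, and the split into $g=1$ versus $g\neq 1$ is necessary because $\phi$ takes the two distinct values $p^m-1$ and $-1$ on these sets; the remaining $n=3$ simplification is routine arithmetic.
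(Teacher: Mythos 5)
Your proposal is correct and is exactly the paper's (one-line) proof: evaluate the closed form of Theorem \ref{prop:gencommutator} at $g\in G'$, using $\sum_{\chi\in\lin(G)}\chi(g)=|G|/|G'|$ together with $\phi(1)=p^m-1$ and Lemma \ref{lemma:uniquenon}(3). One caveat: for $n=3$ and $g\neq 1$, substituting $C^{w_3}(\phi)=\frac{|G|^2}{|G'|}+\frac{|G|(p^m-2)}{p^m-1}$ actually gives $\frac{|G|^3}{|G'|}-\frac{|G|^2}{p^m-1}\bigl(\frac{|G|}{|G'|}+\frac{p^m-2}{p^m-1}\bigr)$, whereas the theorem as printed has $\frac{|G|^2}{p^m-2}$ in front of the parenthesis; this is evidently a typo in the statement, so your assertion that the arithmetic ``yields the stated expression'' should be replaced by noting the correction rather than claiming agreement with the printed formula.
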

\begin{proof} The proof follows from Theorem \ref{prop:gencommutator} and Lemma \ref{lemma:uniquenon} (3).
%
\end{proof}
It is easy to see that the nonlinear irreducible character
comes as a induction of a linear character of the abelian commutator subgroup
for the group of type (2) in Theorem \ref{thm:distinctdegree}. So it is natural to ask for some bounds of $C^{w_n}(\chi)$ if all the nonlinear irreducible characters
comes as a induction of some linear character of a fixed abelian normal subgroup $N$ of $G$. 
We conclude this paper by the following upper bound of $C^{w_n}(\chi)$.
\begin{prop}\label{prop:cd2}
Let $cd(G)=\{1,m\}$ and let $G$ has an abelian normal subgroup of index $m$ such that every nonlinear irreducible character of $G$ is an induction of some
irreducible character of $N$. Then for $n\geq 3$, $C^{w_n}(\chi)   \leq m\frac{|G|^{n-1}}{|N|}$ whenever $\chi\in \nl(G)$.
\end{prop}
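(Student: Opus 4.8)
The plan is to exploit the recursive definition $C^{w_n}(\chi) = \langle \zeta^{w_{n-1}}_G \chi, \chi\rangle$ together with the hypothesis that every $\chi \in \nl(G)$ has the form $\chi = \lambda\uparrow_N^G$ for some $\lambda \in \Irr(N)$, where $N$ is abelian of index $m$. Since $N$ is abelian and $\chi(1) = m = |G:N|$, Frobenius reciprocity and the structure of induced characters give strong control over $\chi\downarrow_N$. The first step is to expand $C^{w_n}(\chi)$ using the inner product and then replace the global inner product over $G$ by one over $N$, using the vanishing behaviour of the nonlinear characters. Concretely, I would write
\[
C^{w_n}(\chi) = \frac{1}{|G|}\sum_{g\in G}\zeta^{w_{n-1}}_G(g)\,|\chi(g)|^2,
\]
and then analyze the support of $|\chi(g)|^2$.

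\textbf{Reducing to $N$ via the induced-character structure.} The key observation is that for $\chi = \lambda\uparrow_N^G$ with $N$ abelian of index $m$, the restriction $\chi\downarrow_N$ is a sum of $m$ linear characters of $N$ (the $G$-conjugates of $\lambda$), so $|\chi(g)|^2 \le m^2$ when $g\in N$ and one controls $|\chi(g)|^2$ off $N$ as well. I would use Clifford theory / the induced character formula to bound $\sum_{g}\zeta^{w_{n-1}}_G(g)|\chi(g)|^2$ from above. The natural route is to observe that
\[
C^{w_n}(\chi) = \langle \zeta^{w_{n-1}}_G\chi,\chi\rangle = \langle (\zeta^{w_{n-1}}_G\chi)\downarrow_N, \lambda\rangle_N
\]
by Frobenius reciprocity applied to $\chi = \lambda\uparrow_N^G$, which converts the computation into an inner product over the abelian subgroup $N$. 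On $N$ the character $\chi\downarrow_N$ decomposes into linear pieces, so one can bound the resulting expression by $m\,\langle \zeta^{w_{n-1}}_G\downarrow_N, 1_N\rangle_N$, and the latter inner product is at most $\langle \zeta^{w_{n-1}}_G, 1_G\rangle$ type quantity, which by Proposition \ref{GCPcoffeicientofirr_prop1} equals $|G|^{n-2}$. Combining these, I expect the chain of inequalities to yield $C^{w_n}(\chi) \le m\,\frac{|G|^{n-1}}{|N|}$.

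\textbf{The main obstacle.} The delicate point will be controlling the sum $\sum_{g}\zeta^{w_{n-1}}_G(g)|\chi(g)|^2$ precisely enough, since $\zeta^{w_{n-1}}_G$ is a nonnegative class function whose values I do not want to compute explicitly. The cleanest argument avoids pointwise values and instead bounds $\langle \zeta^{w_{n-1}}_G\chi,\chi\rangle$ by replacing $\chi$ with its norm-controlling substitute: since $\chi = \lambda\uparrow_N^G$, Frobenius reciprocity gives $\langle \zeta^{w_{n-1}}_G\chi,\chi\rangle = \langle (\zeta^{w_{n-1}}_G)\downarrow_N \cdot (\chi\downarrow_N),\lambda\rangle_N$, and since $\zeta^{w_{n-1}}_G$ is a genuine character (so $\langle \zeta^{w_{n-1}}_G \psi,\psi\rangle_N \ge 0$ for any $\psi$), the factor $m = \chi(1)$ appearing from $\chi\downarrow_N = \sum_{i=1}^{m}\lambda_i$ produces the leading constant. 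The crux is showing the residual inner product is bounded by $\langle \zeta^{w_{n-1}}_G\downarrow_N, 1_N\rangle_N = |G|^{n-1}/|N|$; I expect this to follow from the same averaging identity used in Proposition \ref{GCPcoffeicientofirr_prop1}, namely that summing $\zeta^{w_{n-1}}_G$ against the trivial character of a subgroup containing $G'$ just counts solutions, giving the stated bound.
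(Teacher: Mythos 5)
Your proposal is correct and follows essentially the same route as the paper's proof: reduce the inner product to $N$ (the paper uses that $\chi$ vanishes off the normal subgroup $N$, you use Frobenius reciprocity for $\chi=\lambda\uparrow_N^G$, which amounts to the same thing), extract the factor $\chi(1)=m$ from the restricted character values, and evaluate $\langle \zeta^{w_{n-1}}_G\downarrow_N,1_N\rangle_N=|G|^{n-1}/|N|$. The only real difference is in that last evaluation --- the paper expands $\zeta^{w_{n-1}}_G$ in irreducible characters and kills the nonlinear terms, whereas you count solutions directly using $G'\subseteq N$ (a containment you should justify: every nonlinear irreducible character is induced from $N$, hence does not contain $N$ in its kernel, so $G/N$ is abelian); note also that your intermediate remark that this inner product is bounded by $\langle\zeta^{w_{n-1}}_G,1_G\rangle=|G|^{n-2}$ has the wrong normalization, but your final paragraph states the correct value, so the argument goes through.
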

\begin{proof}
Let $N$ be a normal subgroup of index $m$. Then $G/N$
has no nonlinear irreducible character and hence $G/N$ is abelian. Therefore $G{}'\subseteq N$.

 Let $\chi\in \nl(G)$ then $\psi{}\uparrow^{G}_N =\chi$
for some $\psi\in \Irr(N)$. Therefore $\chi(G\setminus N)=0$.
Thus

\begin{eqnarray*}
C^{w_n}(\chi)=\langle \zeta^{w_{n-1}}_G\chi, \chi\rangle &=&\frac{1}{|G|}\sum_{g\in N}\zeta^{w_{n-1}}_G(g)|\chi(g)|^2\\
&\leq & \frac{\chi(1)^2}{|G|}\sum_{g\in N}\zeta^{w_{n-1}}_G(g)\\
&=&\frac{m}{|N|}\sum_{g\in N}\zeta^{w_{n-1}}_G(g)=\chi(1)\langle \zeta^{w_{n-1}}_G{}\downarrow_N, 1_N\rangle.
\end{eqnarray*}
Let $\varphi = \sum_{\psi\in \lin(G)}\psi$. Using expression of $\zeta^{w_{n-1}}_G$
 and Proposition \ref{GCPcoffeicientofirr_prop1}, observing that $\langle \chi{}\downarrow_N, 1_N\rangle=0$, we get

\begin{eqnarray*}
\langle \zeta^{w_{n-1}}_G{}\downarrow_N, 1_N\rangle&=& |G|^{n-2} \langle \varphi\downarrow_{N}, 1_N\rangle
+\sum_{\phi\in \nl(G)} \frac{|G|}{\phi(1)}\langle  \zeta^{w_{n-2}}_G\phi,\phi\rangle \langle \phi{}\downarrow_N,1_N\rangle\\
&=& |G|^{n-2} \frac{|G|}{|N|}= \frac{|G|^{n-1}}{|N|}\,\,\, ~~~\textnormal{(since $G{}'\subseteq N$)}.
\end{eqnarray*}
Therefore, $C^{w_n}(\chi)\leq m\frac{|G|^{n-1}}{|N|}$ for $\chi\in \nl(G)$.
\end{proof}

\section{Acknowledgements}
The first author is supported by Lady Davis fellowship and thankful to Aner Shalev for his support
to this research work.

\end{document}